\definecolor {processblue}{cmyk}{0.96,0,0,0}
\theoremstyle{definition}
\newcommand{\ConvCol}{\mathop{\bf Col}}
\newcommand{\ConvRow}{\mathop{\bf Row}}
\newcommand{\ConvCirc}{\mathop{\bf Circ}}
\newcommand{\ConvRev}{\mathop{\bf rev}}
\newcommand{\nnz}{\mathop{\bf nnz}}
\newcommand{\mat}{\mathop{\bf mat}}
\newcommand{\FAOsum}{\mathop{\bf sum}}
\newcommand{\FAOcopy}{\mathop{\bf copy}}
\newcommand{\vstack}{\mathop{\bf vstack}}
\newcommand{\FAOsplit}{\mathop{\bf split}}
\newcommand{\kronecker}{\raisebox{1pt}{\ensuremath{\:\otimes\:}}}
\newcommand*\mydots{{\Huge$\cdots$}}
\algrenewcommand\alglinenumber[1]{
    {\sf\footnotesize\addfontfeatures{Colour=888888,Numbers=Monospaced}#1}}
\algrenewcommand\algorithmicrequire{\textbf{Precondition:}}
\algrenewcommand\algorithmicensure{\textbf{Postcondition:}}
\mathchardef\mhyphen="2D
\renewcommand{\algorithmicrequire}{\textbf{Input:}}
\newcommand{\BEAS}{\begin{eqnarray*}}
\newcommand{\EEAS}{\end{eqnarray*}}
\newcommand{\BEA}{\begin{eqnarray}}
\newcommand{\EEA}{\end{eqnarray}}
\newcommand{\BEQ}{\begin{equation}}
\newcommand{\EEQ}{\end{equation}}
\newcommand{\BIT}{\begin{itemize}}
\newcommand{\EIT}{\end{itemize}}
\newcommand{\BNUM}{\begin{enumerate}}
\newcommand{\ENUM}{\end{enumerate}}
\newcommand{\BA}{\begin{array}}
\newcommand{\EA}{\end{array}}
\newcommand{\eg}{{\it e.g.}}
\newcommand{\ie}{{\it i.e.}}
\newcommand{\etc}{{\it etc.}}
\newcommand{\reals}{{\mbox{\bf R}}}
\newcommand{\complex}{{\mbox{\bf C}}}
\newcommand{\symm}{{\mbox{\bf S}}}  
\newcommand{\Tr}{\mathop{\bf Tr}}
\newcommand{\vect}{\mathop{\bf vec}}
\newcommand{\K}{\mathcal{K}}
\newtheorem{theorem}{Theorem}[section]
\long\def\@makecaption#1#2{
   \vskip 9pt
   \begin{small}
   \setbox\@tempboxa\hbox{{\bf #1:} #2}
   \ifdim \wd\@tempboxa > 5.5in
        \begin{center}
        \begin{minipage}[t]{5.5in}
        \addtolength{\baselineskip}{-0.95pt}
        {\bf #1:} #2 \par
        \addtolength{\baselineskip}{0.95pt}
        \end{minipage}
        \end{center}
   \else
    \hbox to\hsize{\hfil\box\@tempboxa\hfil}
   \fi
   \end{small}\par
}
\newcounter{oursection}
\newcounter{lecture}
\title{Matrix-Free Convex Optimization Modeling}
\author{Steven Diamond \and Stephen Boyd}
\begin{document}
\maketitle

\begin{abstract}
We introduce a convex optimization modeling framework that
transforms a convex optimization problem expressed in a form natural
and convenient for the user into an equivalent cone
program in a way that preserves fast linear transforms in
the original problem.
By representing linear functions in the
transformation process not as matrices, but
as graphs that encode composition of linear operators,
we arrive at a matrix-free cone program, \ie,
one whose data matrix is represented by a linear operator and
its adjoint.
This cone program can then be solved by a matrix-free cone solver.
By combining the matrix-free modeling framework and cone solver,
we obtain a general method for efficiently solving convex
optimization problems involving fast linear transforms.
\end{abstract}

\clearpage
\tableofcontents
\clearpage

\section{Introduction}\label{intro_sec}

Convex optimization modeling systems like
YALMIP \cite{Lofberg:04}, CVX \cite{cvx}, CVXPY \cite{cvxpy_paper},
and Convex.jl \cite{cvxjl} provide an automated
framework for converting a convex optimization problem expressed in a
natural human-readable form into the standard form required by a solver,
calling the solver, and transforming the solution back to the
human-readable form.
This allows users to form and solve convex optimization problems
quickly and efficiently.
These systems easily handle problems with a few thousand variables,
as well as much larger problems (say, with hundreds of thousands of variables)
with enough sparsity structure, which generic solvers can exploit.

The overhead of the problem transformation, and the additional
variables and constraints introduced in the transformation process,
result in longer solve times than can be obtained
with a custom algorithm tailored specifically for the particular problem.
Perhaps surprisingly, the additional solve time (compared to
a custom solver) for a modeling system coupled to
a generic solver is often not as much
as one might imagine, at least for modest sized problems.
In many cases the convenience of easily expressing the problem makes up
for the increased solve time using a convex optimization modeling system.

Many convex optimization problems in applications like
signal and image processing, or medical imaging,
involve hundreds of thousands or many millions
of variables, and so are well out of the range that current modeling
systems can handle.
There are two reasons for this. First, the
standard form problem that would be created is too large to store on a single
machine, and second, even if it could be stored, standard interior-point
solvers would be too slow to solve it.
Yet many of these problems are readily solved on a single machine
by custom solvers, which exploit fast linear transforms in the problems.
The key to these custom solvers is to directly use
the fast transforms, never forming the associated matrix.
For this reason these algorithms are
sometimes referred to as \emph{matrix-free} solvers.

The literature on matrix-free solvers in signal and image processing
is extensive; see, \eg,
\cite{TB:09,doi:10.1137/080716542,Ch:06,CP:11,doi:10.1137/080725891,ZCB:07}.
There has been particular interest in matrix-free solvers for LASSO and
basis pursuit denoising problems
\cite{doi:10.1137/080716542,doi:10.1137/S1064827596304010,FGZ:12,fig:07,4407767,doi:10.1137/080714488}.
Matrix-free solvers have also been developed for specialized
control problems \cite{vandenberghe1993polynomial,VaB:95}.
The most general matrix-free solvers target semidefinite programs
\cite{KS:09} or quadratic programs and
related problems \cite{PDCO,GondzioMatFree:12}.
The software closest to a convex optimization modeling system for
matrix-free problems is TFOCS,
which allows users to specify many types of convex problems and solve
them using a variety of matrix-free first-order methods \cite{BCG:11}.

To better understand the advantages of matrix-free solvers,
consider the nonnegative deconvolution problem
\begin{equation}\label{nonneg_deconv_prob}
\begin{array}{ll}
\mbox{minimize} & \|c*x - b\|^2 \\
\mbox{subject to} & x \geq 0,
\end{array}
\end{equation}
where $x \in \reals^n$ is the optimization variable, $c \in \reals^n$
and $b \in \reals^{2n-1}$ are problem data, and
$*$ denotes convolution.
Note that the problem data has size $O(n)$.
There are many custom matrix-free methods for efficiently solving this problem,
with $O(n)$ memory and a few hundred iterations, each of which costs $O(n \log n)$
floating point operations (flops).
It is entirely practical to solve instances of this problem
of size $n=10^7$ on a single computer
\cite{Krishnaprasad1977339,LLS:04}.

Existing convex optimization modeling systems fall far short of the
efficiency of matrix-free solvers on problem (\ref{nonneg_deconv_prob}).
These modeling systems target a standard form in which a problem's linear
structure is represented as a sparse matrix.
As a result, linear functions must be converted into explicit matrix multiplication.
In particular, the operation of convolving by $c$ will be represented as multiplication by a
$(2n-1) \times n$ Toeplitz matrix $C$.
A modeling system will thus transform problem (\ref{nonneg_deconv_prob}) into the problem
\begin{equation}\label{nonneg_deconv_matrix_prob}
\begin{array}{ll}
\mbox{minimize} & \|Cx - b\|^2 \\
\mbox{subject to} & x \geq 0,
\end{array}
\end{equation}
as part of the conversion into standard form.

Once the transformation from (\ref{nonneg_deconv_prob}) to (\ref{nonneg_deconv_matrix_prob}) has taken place,
there is no hope of solving the problem efficiently.
The explicit matrix representation of $C$ requires $O(n^2)$ memory.
A typical interior-point method for solving the transformed
problem will take a few tens of iterations, each requiring $O(n^3)$ flops.
For this reason existing convex optimization modeling systems will struggle to
solve instances of problem (\ref{nonneg_deconv_prob}) with $n=10^4$,
and when they are able to solve the problem,
they will be dramatically slower than custom matrix-free methods.

The key to matrix-free methods is to exploit fast algorithms for evaluating a linear function and its adjoint.
We call an implementation of a linear function that allows us to evaluate
the function and its adjoint a \emph{forward-adjoint oracle} (FAO).
In this paper we describe a new algorithm for converting convex optimization problems into standard form while preserving fast linear functions. (A
preliminary version of this paper appeared in \cite{DB:ICCV}.)
This yields a convex optimization modeling system that can take advantage of fast linear transforms,
and can be used to solve large problems
such as those arising in image and signal processing and other areas,
with millions of variables.
This allows users to rapidly prototype and implement new convex optimization based methods for
large-scale problems.
As with current modeling systems, the goal is not to attain (or beat)
the performance of a custom solver tuned for the specific problem;
rather it is to make the specification of the problem straightforward,
while increasing solve times only moderately.


The outline of our paper is as follows.
In \S\ref{fao_sec} we give many examples of useful FAOs.
In \S\ref{composition_sec} we explain how to compose FAOs so that
we can efficiently evaluate the composition and its adjoint.
In \S\ref{cone_prog_and_solvers_sec} we describe cone programs,
the standard intermediate-form representation of a convex problem,
and solvers for cone programs.
In \S\ref{canon_sec} we describe our algorithm for converting convex
optimization problems into equivalent cone programs
while preserving fast linear transforms.
In \S\ref{numerical_sec} we report numerical results for the
nonnegative deconvolution problem (\ref{nonneg_deconv_prob})
and a special type of linear program,
for our implementation of the abstract ideas in the paper,
using versions of the existing cone solvers
SCS \cite{SCSpaper} and
POGS \cite{fougner2015parameter} modified to be matrix-free.
(The details of these modifications will be described elsewhere.)
Even with our simple, far from optimized matrix-free cone solver,
we demonstrate scaling to problems far larger than those that
can be solved by generic methods (based on sparse matrices), with
acceptable performance loss compared to specialized custom algorithms
tuned to the problems.

We reserve certain details of our matrix-free canonicalization algorithm
for the appendix.
In \S\ref{equivalence_sec} we explain the precise sense in which
the cone program output by our algorithm is equivalent to
the original convex optimization problem.
In \S\ref{sparse_mat_sec} we describe how existing modeling systems
generate a sparse matrix representation of the cone program.
The details of this process have never been published,
and it is interesting to compare with our algorithm.

\section{Forward-adjoint oracles}\label{fao_sec}

\subsection{Definition}\label{fao_def_sec}

A general linear function $f : \reals^n \to \reals^m$ can be
represented on a computer as a dense matrix
$A \in \reals^{m \times n}$ using $O(mn)$ bytes.
We can evaluate $f(x)$ on an input $x \in \reals^n$ in $O(mn)$ flops by
computing the matrix-vector multiplication $Ax$.
We can likewise evaluate the adjoint $f^*(y) =A^Ty$ on an input
$y \in \reals^m$ in $O(mn)$ flops by computing $A^Ty$.

Many linear functions arising in applications have structure
that allows the function and its adjoint to be evaluated in fewer than $O(mn)$ flops
or using fewer than $O(mn)$ bytes of data.
The algorithms and data structures used to evaluate such a function
and its adjoint can differ wildly.
It is thus useful to abstract away the details and view linear functions
as \emph{forward-adjoint oracles} (FAOs), \ie,
a tuple $\Gamma = (f, \Phi_f, \Phi_{f^*})$ where $f$ is a linear function,
$\Phi_f$ is an algorithm for evaluating $f$,
and $\Phi_{f^*}$ is an algorithm for evaluating $f^*$.
We use $n$ to denote the size of $f$'s input and
$m$ to denote the size of $f$'s output.

While we focus on linear functions from $\reals^n$ into $\reals^m$,
the same techniques can be used to handle linear functions involving
complex arguments or values, \ie, from
$\complex^n$ into $\complex^m$, from
$\reals^n$ into $\complex^m$, or from $\complex^n$ into $\reals^m$,
using the standard embedding of complex $n$-vectors into real
$2n$-vectors.
This is useful for problems in which complex data arise naturally
(\eg, in signal processing and communications),
and also in some cases that involve only real data, where
complex intermediate results appear (typically via an FFT).

\subsection{Vector mappings}\label{slf_example_sec}

We present a variety of FAOs for functions that
take as argument, and return, vectors.

\paragraph{Scalar multiplication.}
Scalar multiplication by $\alpha \in \reals$ is represented
by the FAO $\Gamma = (f, \Phi_f, \Phi_{f^*})$,
where
$f: \reals^{n} \to \reals^{n}$ is given by $f(x) = \alpha x$.
The adjoint $f^*$ is the same as $f$.
The algorithms $\Phi_f$ and $\Phi_{f^*}$ simply scale the input,
which requires $O(m+n)$ flops and
$O(1)$ bytes of data to store $\alpha$.
Here $m=n$.

\paragraph{Multiplication by a dense matrix.}
Multiplication by a dense matrix
$A \in \reals^{m \times n}$
is represented by the FAO
$\Gamma = (f, \Phi_f, \Phi_{f^*})$,
where $f(x) = Ax$.
The adjoint $f^*(u) = A^Tu$ is also multiplication by a dense matrix.
The algorithms $\Phi_f$ and $\Phi_{f^*}$ are
the standard dense matrix multiplication algorithm.
Evaluating $\Phi_f$ and $\Phi_{f^*}$ requires $O(mn)$ flops and
$O(mn)$ bytes of data to store $A$ and $A^T$.

\paragraph{Multiplication by a sparse matrix.}
Multiplication by a sparse matrix $A \in \reals^{m \times n}$,
\ie, a matrix with many zero entries,
is represented by the FAO
$\Gamma = (f, \Phi_f, \Phi_{f^*})$, where $f(x) = Ax$.
The adjoint $f^*(u) = A^Tu$ is also multiplication by a sparse matrix.
The algorithms $\Phi_f$ and $\Phi_{f^*}$ are the standard algorithm
for multiplying by a sparse matrix in (for example)
compressed sparse row format.
Evaluating $\Phi_f$ and $\Phi_{f^*}$ requires $O(\nnz(A))$ flops and
$O(\nnz(A))$ bytes of data to store $A$ and $A^T$,
where $\nnz$ is the number of nonzero elements in a sparse matrix \cite[Chap.~2]{Davis:2006:DMS:1196434}.


\paragraph{Multiplication by a low-rank matrix.}
Multiplication by a matrix $A \in \reals^{m \times n}$
with rank $k$, where $k \ll m$ and $k \ll n$, is represented by the FAO
$\Gamma = (f, \Phi_f, \Phi_{f^*})$,
where $f(x) = Ax$.
The matrix $A$ can be factored as $A = BC$,
where $B \in \reals^{m \times k}$ and $C \in \reals^{k \times n}$.
The adjoint $f^*(u) = C^TB^Tu$ is also multiplication by a rank $k$ matrix.
The algorithm $\Phi_f$ evaluates $f(x)$ by first evaluating $z = Cx$ and then evaluating $f(x) = Bz$.
Similarly, $\Phi_{f^*}$ multiplies by $B^T$ and then $C^T$.
The algorithms $\Phi_f$ and $\Phi_{f^*}$ require $O(k(m+n))$ flops
and use $O(k(m+n))$ bytes of data to store
$B$ and $C$ and their transposes.
Multiplication by a low-rank matrix occurs in many applications,
and it is often possible to approximate multiplication by
a full rank matrix with multiplication by a low-rank one,
using the singular value decomposition or methods such as
sketching \cite{Liberty:2013:SDM:2487575.2487623}.



\paragraph{Discrete Fourier transform.}
The discrete Fourier transform (DFT) is represented by the FAO
$\Gamma = (f, \Phi_{f}, \Phi_{f^*})$,
where $f: \reals^{2p} \to \reals^{2p}$ is given by
\[
\begin{array}{ccc}
f(x)_k &= &\frac{1}{\sqrt {p}} \sum_{j=1}^p
\Re\left(\omega_p^{(j-1)(k-1)}\right)x_j - \Im\left(\omega_p^{(j-1)(k-1)}\right) x_{j+p} \\
f(x)_{k+p} &= &\frac{1}{\sqrt {p}} \sum_{j=1}^p
\Im\left(\omega_p^{(j-1)(k-1)}\right)x_j + \Re\left(\omega_p^{(j-1)(k-1)}\right) x_{j+p}
\end{array}
\]
for $k=1,\ldots, p$. Here $\omega_p = e^{-2 \pi i/p}$.
The adjoint $f^*$ is the inverse DFT.
The algorithm $\Phi_{f}$ is the fast Fourier transform (FFT),
while $\Phi_{f^*}$ is the inverse FFT.
The algorithms can be evaluated in $O((m+n)\log(m+n))$ flops,
using only $O(1)$ bytes of data to store the dimensions of $f$'s input
and output \cite{cooley1965algorithm,9780898712858}.
Here $m=n=2p$.
There are many fast transforms derived from the DFT,
such as the discrete Hartley transform \cite{1457236}
and the discrete sine and cosine transforms \cite{ANR:74,Mar:94},
with the same computational complexity as the FFT.

\paragraph{Convolution.}
Convolution with a kernel $c \in \reals^p$ is
defined as $f: \reals^n \to \reals^m$, where
\begin{equation}\label{conv_eq}
f(x)_k = \sum_{i+j=k+1}c_i x_j, \quad k=1, \ldots, m.
\end{equation}
Different variants of convolution restrict the indices $i,j$ to different ranges,
or interpret vector elements outside their natural ranges as zero or using
periodic (circular) indexing.

Standard (column) convolution takes $m=n+p-1$, and
defines $c_i$ and $x_j$ in~(\ref{conv_eq})
as zero when the index is ouside its range.
In this case the associated matrix
$\ConvCol(c) \in \reals^{n+p-1 \times n}$
is Toeplitz, with each column a shifted
version of $c$:
\[
\ConvCol(c) = \left[\begin{array}{ccc}
c_1 & &  \\
c_2 & \ddots & \\
\vdots & \ddots & c_1 \\
c_{p} &   & c_2 \\
    &  \ddots &  \vdots \\
    &         & c_{p}
\end{array}\right].
\]

Another standard form, row convolution, restricts the indices in
(\ref{conv_eq}) to the range $k=p,\ldots,n$.
For simplicity we assume that $n \geq p$.
In this case the associated matrix
$\ConvRow(c) \in \reals^{n-p+1 \times n}$
is Toeplitz, with each row a shifted version of $c$,
in reverse order:
\[
\ConvRow(c) = \left[\begin{array}{cccccc}
c_{p} & c_{p-1} & \hdots & c_1 &  & \\
        & \ddots & \ddots &     & \ddots & \\
        &        & c_{p} & c_{p-1} &\hdots & c_1
\end{array}\right].
\]
The matrices $\ConvCol(c)$ and $\ConvRow(c)$ are related by the equalities
\[
\ConvCol(c)^T = \ConvRow(\ConvRev(c)), \qquad
\ConvRow(c)^T = \ConvCol(\ConvRev(c)),
\]
where $\ConvRev(c)_k = c_{p-k+1}$ reverses the order of
the entries of $c$.

Yet another variant on convolution is circular convolution,
where we take $p=n$ and interpret the entries of vectors
outside their range modulo $n$.
In this case the associated matrix
$\ConvCirc(c) \in \reals^{n \times n}$
is Toeplitz, with each column and row a (circularly) shifted
version of $c$:
\[
\ConvCirc(c) = \left[\begin{array}{cccccc}
c_{1}  & c_{n} & c_{n-1} & \hdots & \hdots  & c_2 \\
c_{2}  & c_1 &  c_{n} & \ddots &   & \vdots \\
c_{3}  & c_2 & \ddots & \ddots & \ddots & \vdots \\
\vdots  & \ddots & \ddots & \ddots & c_{n} & c_{n-1} \\
\vdots  &        & \ddots & c_2 & c_1 & c_{n} \\
c_{n} & \hdots & \hdots & c_3 & c_2 & c_1
\end{array}\right].
\]

Column convolution with $c \in \reals^p$ is represented by the FAO
$\Gamma = (f, \Phi_{f}, \Phi_{f^*})$,
where $f: \reals^n \to \reals^{n+p-1}$ is given by
$f(x) = \ConvCol(c)x$.
The adjoint $f^*$ is row convolution with $\ConvRev(c)$,
\ie, $f^*(u) = \ConvRow(\ConvRev(c))u$.
The algorithms $\Phi_f$ and $\Phi_{f^*}$ are given in
algorithms \ref{col_conv_alg} and \ref{row_conv_alg},
and require $O((m+n+p)\log(m+n+p))$ flops.
Here $m = n + p - 1$.
If the kernel is small (\ie, $p \ll n$),
$\Phi_f$ and $\Phi_{f^*}$ instead evaluate (\ref{conv_eq})
directly in $O(np)$ flops.
In either case, the algorithms $\Phi_f$ and $\Phi_{f^*}$ use $O(p)$ bytes
of data to store $c$ and $\ConvRev(c)$ \cite{Co:69,9780898712858}.

\begin{algorithm}
\caption{Column convolution $c*x$.}\label{col_conv_alg}
\begin{algorithmic}
\Require{$c \in \reals^p$ is a length $p$ array.
$x \in \reals^n$ is a length $n$ array.
$y \in \reals^{n+p-1}$ is a length $n+p-1$ array.}
\Statex
\State Extend $c$ and $x$ into length $n+p-1$ arrays
by appending zeros.
\State $\hat{c} \gets \text{FFT of } c$.
\State $\hat{x} \gets \text{FFT of } x$.
\For {$i=1,\ldots,n+p-1$}
    \State $y_i \gets \hat{c}_i\hat{x}_i$.
\EndFor
\State $y \gets \text{inverse FFT of } y$.
\Statex
\Ensure{$y = c*x$.}
\end{algorithmic}
\end{algorithm}

\begin{algorithm}
\caption{Row convolution $c*u$.}\label{row_conv_alg}
\begin{algorithmic}
\Require{$c \in \reals^p$ is a length $p$ array.
$u \in \reals^{n+p-1}$ is a length $n+p-1$ array.
$v \in \reals^n$ is a length $n$ array.}
\Statex
\State Extend $\ConvRev(c)$ and $v$ into
length $n+p-1$ arrays by appending zeros.
\State $\hat{c} \gets \text{inverse FFT of zero-padded} \ConvRev(c)$.
\State $\hat{u} \gets \text{FFT of } u$.
\For {$i=1,\ldots,n+p-1$}
    \State $v_i \gets \hat{c}_i\hat{u}_i$.
\EndFor
\State $v \gets \text{inverse FFT of } v$.
\State Reduce $v$ to a length $n$ array by removing the
last $p-1$ entries.
\Statex
\Ensure{$v = c*u$.}
\end{algorithmic}
\end{algorithm}

\begin{algorithm}
\caption{Circular convolution $c*x$.}\label{circ_conv_alg}
\begin{algorithmic}
\Require{$c \in \reals^n$ is a length $n$ array.
$x \in \reals^n$ is a length $n$ array.
$y \in \reals^n$ is a length $n$ array.}
\Statex
\State $\hat{c} \gets \text{FFT of } c$.
\State $\hat{x} \gets \text{FFT of } x$.
\For {$i=1,\ldots,n$}
    \State $y_i \gets \hat{c}_i\hat{x}_i$.
\EndFor
\State $y \gets \text{inverse FFT of } y$.
\Statex
\Ensure{$y = c*x$.}
\end{algorithmic}
\end{algorithm}

Circular convolution with $c \in \reals^n$ is represented by the FAO
$\Gamma = (f, \Phi_{f}, \Phi_{f^*})$,
where $f: \reals^n \to \reals^n$ is given by $f(x) = \ConvCirc(c)x$.
The adjoint $f^*$ is circular convolution with
\[
\tilde c = \left[\begin{array}{c}
c_1 \\
c_{n} \\
c_{n-1} \\
\vdots \\
c_{2}
\end{array}\right].
\]
The algorithms $\Phi_f$ and $\Phi_{f^*}$ are given in
algorithm \ref{circ_conv_alg},
and require $O((m+n)\log(m+n))$ flops.
The algorithms $\Phi_f$ and $\Phi_{f^*}$ use $O(m+n)$
bytes of data to store $c$ and $\tilde c$ \cite{Co:69,9780898712858}.
Here $m=n$.



\paragraph{Discrete wavelet transform.}
The discrete wavelet transform (DWT) for orthogonal wavelets
is represented by the FAO $\Gamma = (f, \Phi_f, \Phi_{f^*})$,
where the function $f:\reals^{2^p} \to \reals^{2^p}$ is given by
\begin{equation}\label{DWT}
f(x) =
\left[\begin{array}{cc}
D_{1}G_{1} & \\
D_{1}H_{1} & \\
 & I_{2^p-2}
\end{array}\right]
\cdots
\left[\begin{array}{cc}
D_{p-1}G_{p-1} & \\
D_{p-1}H_{p-1} & \\
 & I_{2^{p-1}}
\end{array}\right]
\left[\begin{array}{c}
D_pG_p \\
D_pH_p
\end{array}\right]x,
\end{equation}
where $D_k \in \reals^{2^{k-1} \times 2^k}$ is defined such that
$(D_kx)_i = x_{2i}$
and the matrices $G_k \in \reals^{2^k \times 2^k}$ and
$H_k \in \reals^{2^k \times 2^k}$
are given by
\[
G_k = \ConvCirc\left(
\left[\begin{array}{c}
g \\
0 \\
\end{array}\right]\right), \qquad
H_k = \ConvCirc\left(
\left[\begin{array}{c}
h \\
0 \\
\end{array}\right]\right).
\]
Here $g \in \reals^q$ and $h \in \reals^q$ are low and high pass
filters, respectively, that parameterize the DWT.
The adjoint $f^*$ is the inverse DWT.
The algorithms $\Phi_f$ and $\Phi_f^*$ repeatedly convolve
by $g$ and $h$, which requires $O(q(m+n))$ flops and uses
$O(q)$ bytes to store $h$ and $g$ \cite{Mal:89}.
Here $m=n=2^p$.
Common orthogonal wavelets include the Haar wavelet and the
Daubechies wavelets \cite{Dau:88,daubechies1992ten}.
There are many variants on the particular DWT described here.
For instance, the product in (\ref{DWT}) can be terminated after
fewer than $p-1$ multiplications by $G_k$ and $H_k$ \cite{ripples},
$G_k$ and $H_k$ can be defined as a different type of convolution matrix,
or the filters $g$ and $h$ can be different lengths,
as in biorthogonal wavelets \cite{CDF:92}.





\paragraph{Discrete Gauss transform.}
The discrete Gauss transform (DGT) is represented by the
FAO $\Gamma = (f_{Y,Z,h}, \Phi_{f}, \Phi_{f^*})$,
where the function $f_{Y,Z,h}: \reals^n \to \reals^m$
is parameterized by $Y \in \reals^{m \times d}$,
$Z \in \reals^{n \times d}$, and $h > 0$.
The function $f_{Y,Z,h}$ is given by
\[
f_{Y,Z,h}(x)_i = \sum_{j=1}^n\exp(-\|y_{i}-z_{j}\|^2/h^2)x_j,
\quad i=1,\ldots,m,
\]
where $y_i \in \reals^d$ is the $i$th column of $Y$ and
$z_j \in \reals^d$ is the $j$th column of $Z$.
The adjoint of $f_{Y,Z,h}$ is the DGT $f_{Z,Y,h}$.
The algorithms $\Phi_{f}$ and $\Phi_{f^*}$ are
the improved fast Gauss transform,
which evaluates $f(x)$ and $f^*(u)$
to a given accuracy in $O(d^p(m+n))$ flops.
Here $p$ is a parameter that depends on the accuracy desired.
The algorithms $\Phi_{f}$ and $\Phi_{f^*}$ use $O(d(m + n))$
bytes of data to store $Y$, $Z$, and $h$ \cite{1238383}.
An interesting application of the DGT is efficient
multiplication by a Gaussian kernel \cite{NIPS2004_2550}.

\paragraph{Multiplication by the inverse of a sparse triangular matrix.}
Multiplication by the inverse of a sparse lower triangular matrix
$L \in \reals^{n \times n}$
with
nonzero elements on its diagonal
is represented by the FAO
$\Gamma = (f, \Phi_{f}, \Phi_{f^*})$,
where $f(x) = L^{-1}x$.
The adjoint $f^*(u) = (L^{T})^{-1}u$ is multiplication by the inverse of
a sparse upper triangular matrix.
The algorithms $\Phi_f$ and $\Phi_{f^*}$ are forward and backward
substitution, respectively,
which require $O(\nnz(L))$ flops and use $O(\nnz(L))$ bytes of data
to store $L$ and $L^T$
\cite[Chap.~3]{Davis:2006:DMS:1196434}.


\paragraph{Multiplication by a pseudo-random matrix.}
Multiplication by a matrix $A \in \reals^{m \times n}$ whose
columns are given by a pseudo-random sequence
(\ie, the first $m$ values of the sequence are the first column of $A$,
the next $m$ values are the second column of $A$, \etc) is represented
by the FAO
$\Gamma = (f, \Phi_{f}, \Phi_{f^*})$,
where $f(x) = Ax$.
The adjoint $f^*(u) = A^Tu$ is multiplication by a matrix
whose rows are given by a pseudo-random sequence
(\ie, the first $m$ values of the sequence are the first row of $A^T$,
the next $m$ values are the second row of $A^T$, \etc).
The algorithms $\Phi_f$ and $\Phi_{f^*}$ are the standard dense matrix
multiplication algorithm,
iterating once over the pseudo-random sequence without storing any of
its values.
The algorithms require $O(mn)$ flops and use $O(1)$ bytes of data
to store the the seed for the pseudo-random sequence.
Multiplication by a pseudo-random matrix might appear, for example, as a
measurement ensemble in compressed sensing
\cite{Gilbert:2007:OSF:1250790.1250824}.

\paragraph{Multiplication by the pseudo-inverse of a graph Laplacian.}
Multiplication by the pseudo-inverse of a graph Laplacian matrix
$L \in \reals^{n \times n}$ is represented by the FAO
$\Gamma = (f, \Phi_{f}, \Phi_{f^*})$,
where $f(x) = L^\dagger x$.
A graph Laplacian is a symmetric matrix with nonpositive off
diagonal entries and the property $L1 = 0$,
\ie, the diagonal entry in a row is the negative sum of the off-diagonal
entries in that row.
(This implies that it is positive semidefinite.)
The adjoint $f^*$ is the same as $f$, since $L=L^T$.
The algorithms $\Phi_f$ and $\Phi_{f^*}$ are one of the fast
solvers for graph Laplacian systems that evaluate
$f(x)=f^*(x)$ to a given accuracy in around $O(\nnz(L))$ flops
\cite{Spielman:2004:NTA:1007352.1007372,
Kelner:2013:SCA:2488608.2488724,vishnoi2012laplacian}.
(The details of the computational complexity are much more involved.)
The algorithms use $O(\nnz(L))$ bytes of data to store $L$.






\subsection{Matrix mappings}\label{matrix_arg_sec}

We now consider linear functions that take as argument, or return,
matrices.
We take the standard inner product on matrices
$X,Y\in \reals^{p \times q}$,
\[
\langle X, Y \rangle =
\sum_{i=1,\ldots, p, ~j=1,\ldots, q}  X_{ij} Y_{ij} =
\Tr (X^TY).
\]
The adjoint of a linear function
$f: \reals^{p \times q} \to \reals^{s \times t}$
is then the function
$f^*:\reals^{s \times t} \to \reals^{p \times q}$
for which
\[
\Tr (f(X)^TY)  = \Tr(X^Tf^*(Y)),
\]
holds for all $X \in \reals^{p \times q}$ and
$Y \in \reals^{s \times t}$.

\paragraph{Vec and mat.}
The function $\vect: \reals^{p\times q} \to \reals^{p q}$ is represented
by the FAO $\Gamma = (f, \Phi_{f}, \Phi_{f^*})$,
where $f(X)$ converts the matrix $X \in \reals^{p\times q}$ into a
vector $y \in \reals^{p q}$ by stacking the columns.
The adjoint $f^*$ is the function
$\mat: \reals^{p q} \to \reals^{p\times q}$,
which outputs a matrix whose columns are successive slices of its vector argument.
The algorithms $\Phi_f$ and $\Phi_{f^*}$ simply reinterpret
their input as a differently shaped output in $O(1)$ flops,
using only $O(1)$ bytes of data to store the dimensions of $f$'s input
and output.

\paragraph{Sparse matrix mappings.}
Many common linear functions on and to matrices are given by a sparse
matrix multiplication of the vectorized argument,
reshaped as the output matrix.
For $X\in \reals^{p \times q}$ and $f(X)=Y\in \reals^{s \times t}$,
\[
Y = \mat (A \vect(X)).
\]
The form above describes the general linear mapping from
$\reals^{p \times q}$ to $\reals^{s \times t}$; we are interested in
cases when $A$ is sparse, \ie, has far fewer than $pqst$ nonzero entries.
Examples include extracting a submatrix, extracting the diagonal, forming
a diagonal matrix, summing the rows or columns of a matrix,
transposing a matrix, scaling its rows or columns, and so on.
The FAO representation of each such function is
$\Gamma = (f, \Phi_f, \Phi_{f^*})$,
where $f$ is given above and the adjoint is given by
\[
f^*(U) = \mat (A^T \vect(U)).
\]
The algorithms $\Phi_f$ and $\Phi_{f^*}$ are the standard algorithms for
multiplying a vector by a sparse matrix in (for example) compressed
sparse row format.
The algorithms require $O(\nnz(A))$ flops and use $O(\nnz(A))$ bytes of
data to store $A$ and $A^T$
\cite[Chap.~2]{Davis:2006:DMS:1196434}.

\paragraph{Matrix product.}
Multiplication on the left by a matrix $A \in \reals^{s \times p}$
and on the right by a matrix $B \in \reals^{q \times t}$
is represented by the FAO
$\Gamma = (f, \Phi_f, \Phi_{f^*})$,
where $f: \reals^{p \times q} \to \reals^{s \times t}$
is given by $f(X) = AXB$.
The adjoint $f^*(U) = A^T U B^T$ is also a matrix product.
There are two ways to implement $\Phi_f$ efficiently,
corresponding to different orders of operations in multiplying out $AXB$.
In one method we multiply by $A$ first and $B$ second, for a total of
$O(s(pq + qt))$ flops (assuming that $A$ and $B$ are dense).
In the other method we multiply by $B$ first and $A$ second,
for a total of $O(p(qt + st))$ flops.
The former method is more efficient if
\[
\frac{1}{t} + \frac{1}{p} < \frac{1}{s} + \frac{1}{q}.
\]
Similarly, there are two ways to implement $\Phi_{f^*}$,
one requiring $O(s(pq + qt))$ flops and the other requiring
$O(p(qt + st))$ flops.
The algorithms $\Phi_f$ and $\Phi_{f^*}$ use $O(sp + qt)$
bytes of data to store $A$ and $B$ and their transposes.
When $p=q=s=t$, the flop count for $\Phi_f$ and $\Phi_{f^*}$
simplifies to $O\left((m+n)^{1.5}\right)$ flops.
Here $m=n=pq$.
(When the matrices $A$ or $B$ are sparse, evaluating
$f(X)$ and $f^*(U)$ can be done even more efficiently.)
The matrix product function is used in Lyapunov and
algebraic Riccati inequalities and Sylvester equations,
which appear in many problems from control theory
\cite{Gardiner:1992:SSM:146847.146929,VaB:95}.

\paragraph{2-D discrete Fourier transform.}
The 2-D DFT is represented by the FAO
$\Gamma = (f, \Phi_{f}, \Phi_{f^*})$,
where $f:  \reals^{2p\times q} \to \reals^{2p\times q}$ is given by
\[
\begin{array}{ccc}
f(X)_{k\ell} &= &\frac{1}{\sqrt{pq}}\sum_{s=1}^p\sum_{t=1}^q
\Re\left(\omega_p^{(s-1)(k-1)}\omega_q^{(t-1)(\ell-1)}\right)X_{st}
- \Im\left(\omega_p^{(s-1)(k-1)}\omega_q^{(t-1)(\ell-1)}\right)X_{s+p,t} \\
f(X)_{k+p,\ell} &= &\frac{1}{\sqrt{pq}}\sum_{s=1}^p\sum_{t=1}^q
\Im\left(\omega_p^{(s-1)(k-1)}\omega_q^{(t-1)(\ell-1)}\right)X_{st}
+ \Re\left(\omega_p^{(s-1)(k-1)}\omega_q^{(t-1)(\ell-1)}\right)X_{s+p,t},
\end{array}
\]
for $k=1,\ldots,p$ and $\ell=1,\ldots,q$.
Here $\omega_{p} = e^{-2 \pi i/p}$ and $\omega_{q}=e^{-2 \pi i/q}$.
The adjoint $f^*$ is the inverse 2-D DFT.
The algorithm $\Phi_f$ evaluates $f(X)$ by first applying the FFT to each row of $X$,
replacing the row with its DFT,
and then applying the FFT to each column,
replacing the column with its DFT.
The algorithm $\Phi_{f^*}$ is analogous, but with the inverse FFT and
inverse DFT taking the role of the FFT and DFT.
The algorithms $\Phi_f$ and $\Phi_{f^*}$ require $O((m+n)\log(m+n))$ flops,
using only $O(1)$ bytes of data to store the dimensions of $f$'s input
and output \cite{Lim:1990:TSI:130247,9780898712858}.
Here $m=n=2pq$.


\paragraph{2-D convolution.}
2-D convolution with a kernel $C \in \reals^{p\times q}$ is defined as
$f: \reals^{s \times t} \to \reals^{m_1 \times m_2}$, where
\begin{equation}\label{2D_conv_eq}
f(X)_{k\ell}= \sum_{i_1+i_2=k+1, j_1+j_2=\ell+1}C_{i_1j_1}X_{i_2j_2},
\quad k=1,\ldots,m_1, \quad \ell=1,\ldots,m_2.
\end{equation}
Different variants of 2-D convolution restrict the indices $i_1,j_1$ and
$i_2,j_2$ to different ranges,
or interpret matrix elements outside their natural ranges as zero
or using periodic (circular) indexing.
There are 2-D analogues of 1-D column, row, and circular convolution.

Standard 2-D (column) convolution,
the analogue of 1-D column convolution,
takes $m_1 = s+p-1$ and $m_2 = t+q-1$,
and defines $C_{i_1j_1}$ and $X_{i_2j_2}$ in (\ref{2D_conv_eq}) as zero
when the indices are outside their range.
We can represent the 2-D column convolution $Y = C*X$ as the matrix
multiplication
\[
Y = \mat(\ConvCol(C)\vect(X)),
\]
where $\ConvCol(C) \in \reals^{(s+p-1)(t+q-1) \times st}$
is given by:
\[
\ConvCol(C) = \left[\begin{array}{ccc}
\ConvCol(c_1) & &  \\
\ConvCol(c_2) & \ddots & \\
\vdots & \ddots & \ConvCol(c_1) \\
\ConvCol(c_q) &   & \ConvCol(c_2) \\
    &  \ddots &  \vdots \\
    &         & \ConvCol(c_q)
\end{array}\right].
\]
Here $c_1,\ldots,c_q \in \reals^p$ are the columns of $C$
and $\ConvCol(c_1),\ldots,\ConvCol(c_q) \in \reals^{s+p-1 \times s}$
are 1-D column convolution matrices.


The 2-D analogue of 1-D row convolution restricts the indices in
(\ref{2D_conv_eq})
to the range $k=p,\ldots,s$ and $\ell=q,\ldots,t$.
For simplicity we assume $s\geq p$ and $t \geq q$.
The output dimensions are $m_1 = s-p+1$ and $m_2 = t-q+1$.
We can represent the 2-D row convolution $Y = C*X$ as the matrix
multiplication
\[
Y = \mat(\ConvRow(C)\vect(X)),
\]
where $\ConvRow(C) \in \reals^{(s-p+1)(t-q+1) \times st}$
is given by:
\[
\ConvRow(C) = \left[\begin{array}{cccccc}
\ConvRow(c_{q}) & \ConvRow(c_{q-1}) & \hdots & \ConvRow(c_1) &  & \\
        & \ddots & \ddots &     & \ddots & \\
        &        & \ConvRow(c_{q}) & \ConvRow(c_{q-1}) &
        \hdots & \ConvRow(c_1)
\end{array}\right].
\]
Here $\ConvRow(c_1),\ldots,\ConvRow(c_q) \in \reals^{s-p+1 \times s}$
are 1-D row convolution matrices.
The matrices $\ConvCol(C)$ and $\ConvRow(C)$ are related by the
equalities
\[
\ConvCol(C)^T = \ConvRow(\ConvRev(C)), \qquad
\ConvRow(C)^T = \ConvCol(\ConvRev(C)),
\]
where $\ConvRev(C)_{k\ell} = C_{p-k+1,q-\ell+1}$
reverses the order of of the columns of $C$ and of the entries in each
row.

In the 2-D analogue of 1-D circular convolution,
we take $p=s$ and $q=t$ and interpret the entries of matrices
outside their range modulo $s$ for the row index and modulo $t$ for the
column index.
We can represent the 2-D circular convolution $Y = C*X$ as the matrix
multiplication
\[
Y = \mat(\ConvCirc(C)\vect(X)),
\]
where $\ConvCirc(C) \in \reals^{st \times st}$
is given by:
\[
\ConvCirc(C) = \left[\begin{array}{cccccc}
\ConvCirc(c_{1})  & \ConvCirc(c_{t}) & \ConvCirc(c_{t-1}) & \hdots &
\hdots  & \ConvCirc(c_2) \\
\ConvCirc(c_{2})  & \ConvCirc(c_1) &  \ConvCirc(c_{t}) & \ddots &   & \vdots \\
\ConvCirc(c_{3})  & \ConvCirc(c_2) & \ddots & \ddots & \ddots & \vdots \\
\vdots  & \ddots & \ddots & \ddots & \ConvCirc(c_{t}) & \ConvCirc(c_{t-1}) \\
\vdots  &        & \ddots & \ConvCirc(c_2) & \ConvCirc(c_1) & \ConvCirc(c_{t}) \\
\ConvCirc(c_{t}) & \hdots & \hdots & \ConvCirc(c_3) & \ConvCirc(c_2) & \ConvCirc(c_1)
\end{array}\right].
\]
Here $\ConvCirc(c_1),\ldots,\ConvCirc(c_t) \in \reals^{s \times s}$
are 1-D circular convolution matrices.

2-D column convolution with $C \in \reals^{p\times q}$ is represented by
the FAO $\Gamma = (f, \Phi_{f}, \Phi_{f^*})$,
where $f: \reals^{s \times t} \to \reals^{s+p-1 \times t+q-1}$
is given by
\[
f(X) = \mat(\ConvCol(C)\vect(X)).
\]
The adjoint $f^*$ is 2-D row convolution with $\ConvRev(C)$, \ie,
\[
f^*(U) = \mat(\ConvRow(\ConvRev(C))\vect(U)).
\]
The algorithms $\Phi_{f}$ and $\Phi_{f^*}$ are given in
algorithms \ref{2D_col_conv_alg} and \ref{2D_row_conv_alg},
and require $O((m+n)\log(m+n))$ flops.
Here $m=(s+p-1)(t+q-1)$ and $n=st$.
If the kernel is small (\ie, $p \ll s$ and $q \ll t$),
$\Phi_{f}$ and $\Phi_{f^*}$ instead evaluate (\ref{2D_conv_eq}) directly
in $O(pqst)$ flops.
In either case, the algorithms $\Phi_{f}$ and $\Phi_{f^*}$
use $O(pq)$ bytes of data to store $C$ and $\ConvRev(C)$
\cite[Chap.~4]{9780898712858}.
Often the kernel is parameterized (\eg, a Gaussian kernel),
in which case more compact representations of $C$ and $\ConvRev(C)$ are
possible \cite[Chap.~7]{Forsyth:2002:CVM:580035}.



\begin{algorithm}
\caption{2-D column convolution $C*X$.}\label{2D_col_conv_alg}
\begin{algorithmic}
\Require{$C \in \reals^{p \times q}$ is a length $pq$ array.
$X \in \reals^{s \times t}$ is a length $st$ array.
$Y \in \reals^{s+p-1 \times t+q-1}$ is a length $(s+p-1)(t+q-1)$ array.}
\Statex
\State Extend the columns and rows of $C$ and $X$ with zeros so
$C,X \in \reals^{s+p-1 \times t+q-1}$.
\State $\hat{C} \gets \text{2-D DFT of } C$.
\State $\hat{X} \gets \text{2-D DFT of } X$.
\For {$i=1,\ldots,s+p-1$}
    \For {$j=1,\ldots,t+q-1$}
        \State $Y_{ij} \gets \hat{C}_{ij}\hat{X}_{ij}$.
    \EndFor
\EndFor
\State $Y \gets \text{inverse 2-D DFT of } Y$.
\Statex
\Ensure{$Y = C*X$.}
\end{algorithmic}
\end{algorithm}

\begin{algorithm}
\caption{2-D row convolution $C*U$.}\label{2D_row_conv_alg}
\begin{algorithmic}
\Require{$C \in \reals^{p \times q}$ is a length $pq$ array.
$U \in \reals^{s+p-1 \times t+q-1}$ is a length $(s+p-1)(t+q-1)$ array.
$V\in \reals^{s \times t}$ is a length $st$ array.}
\Statex
\State Extend the columns and rows of $\ConvRev(C)$
and $V$ with zeros so $\ConvRev(C),V \in \reals^{s+p-1 \times t+q-1}$.
\State $\hat{C} \gets \text{inverse 2-D DFT of zero-padded} \ConvRev(C)$.
\State $\hat{U} \gets \text{2-D DFT of } U$.
\For {$i=1,\ldots,s+p-1$}
    \For {$j=1,\ldots,t+q-1$}
        \State $V_{ij} \gets \hat{C}_{ij}\hat{U}_{ij}$.
    \EndFor
\EndFor
\State $V \gets \text{inverse 2-D DFT of } V$.
\State Truncate the rows and columns of $V$ so that $V \in \reals^{s \times t}$.
\Statex
\Ensure{$V = C*U$.}
\end{algorithmic}
\end{algorithm}

\begin{algorithm}
\caption{2-D circular convolution $C*X$.}\label{2D_circ_conv_alg}
\begin{algorithmic}
\Require{$C \in \reals^{s \times t}$ is a length $st$ array.
$X \in \reals^{s \times t}$ is a length $st$ array.
$Y \in \reals^{s \times t}$ is a length $st$ array.}
\Statex
\State $\hat{C} \gets \text{2-D DFT of } C$.
\State $\hat{X} \gets \text{2-D DFT of } X$.
\For {$i=1,\ldots,s$}
    \For {$j=1,\ldots,t$}
        \State $Y_{ij} \gets \hat{C}_{ij}\hat{X}_{ij}$.
    \EndFor
\EndFor
\State $Y \gets \text{inverse 2-D DFT of } Y$.
\Statex
\Ensure{$Y = C*X$.}
\end{algorithmic}
\end{algorithm}

2-D circular convolution with $C \in \reals^{s \times t}$ is represented
by the FAO $\Gamma = (f, \Phi_{f}, \Phi_{f^*})$,
where $f: \reals^{s \times t} \to \reals^{s \times t}$ is given by
\[
f(X) = \mat(\ConvCirc(C)\vect(X)).
\]
The adjoint $f^*$ is 2-D circular convolution with
\[
\tilde C = \left[\begin{array}{ccccc}
C_{1,1} & C_{1,t} & C_{1,t-1} & \hdots & C_{1,2} \\
C_{s,1} & C_{s,t} & C_{s,t-1} & \hdots & C_{s,2} \\
C_{s-1,1} & C_{s-1,t} & C_{s-1,t-1} & \hdots & C_{s-1,2} \\
\vdots & \vdots & \vdots & \ddots & \vdots \\
C_{2,1} & C_{2,t} & C_{2,t-1} & \hdots & C_{2,2} \\
\end{array}\right].
\]
The algorithms $\Phi_f$ and $\Phi_{f^*}$ are given in
algorithm \ref{2D_circ_conv_alg},
and require $O((m+n)\log(m+n))$ flops.
The algorithms $\Phi_f$ and $\Phi_{f^*}$ use $O(m+n)$
bytes of data to store $C$ and $\tilde C$ \cite[Chap.~4]{9780898712858}.
Here $m=n=st$.

\paragraph{2-D discrete wavelet transform.}
The 2-D DWT for separable, orthogonal wavelets is represented by the FAO
$\Gamma = (f, \Phi_{f}, \Phi_{f^*})$,
where $f: \reals^{2^p \times 2^p} \to \reals^{2^p \times 2^p}$
is given by
\[
f(X)_{ij} = W_k\cdots W_{p-1}W_pXW_p^TW_{p-1}^T\cdots W_k^T,
\]
where $k = \max\{\lceil\log_2(i)\rceil, \lceil\log_2(j)\rceil,1\}$
and $W_k \in \reals^{2^p \times 2^p}$ is given by
\[
W_k = \left[\begin{array}{cc}
D_kG_k & \\
D_kH_k & \\
 & I
\end{array}\right].
\]
Here $D_k$, $G_k$, and $H_k$ are defined as for the 1-D DWT.
The adjoint $f^*$ is the inverse 2-D DWT.
As in the 1-D DWT, the algorithms $\Phi_f$ and $\Phi_{f^*}$
repeatedly convolve by the filters $g \in \reals^q$ and $h \in \reals^q$,
which requires $O(q(m+n))$ flops and uses $O(q)$ bytes of data to store
$g$ and $h$ \cite{ripples}.
Here $m = n = 2^p$.
There are many alternative wavelet transforms for 2-D data; see, \eg,
\cite{doi:10.1137/05064182X,SCD:02,DV:03,Jacques20112699}.

\subsection{Multiple vector mappings}\label{multi_arg_sec}

In this section we consider linear functions that take as argument,
or return, multiple vectors.
(The idea is readily extended to the case when the
arguments or return values are matrices.)
The adjoint is defined by the inner product
\[
\langle (x_1,\ldots,x_k), (y_1,\ldots,y_k) \rangle =
\sum_{i=1}^k \langle x_i,y_i\rangle =
\sum_{i=1}^k x_i^Ty_i.
\]
The adjoint of a linear function
$f: \reals^{n_1} \times \cdots \times \reals^{n_k}
\to \reals^{m_1} \times \cdots \times \reals^{m_{\ell}}$
is then the function
$f^*: \reals^{m_1} \times \cdots \times \reals^{m_{\ell}}
\to \reals^{n_1} \times \cdots \times \reals^{n_k}$
for which
\[
\sum_{i=1}^{\ell} f(x_1,\ldots,x_k)_i^T y_i =
\sum_{i=1}^{k} x_i^T f^*(y_1,\ldots,y_{\ell})_i,
\]
holds for all $(x_1,\ldots,x_k) \in \reals^{n_1} \times \cdots \times \reals^{n_k}$
and
$(y_1,\ldots,y_{\ell}) \in \reals^{m_1} \times \cdots \times \reals^{m_{\ell}}$.
Here $f(x_1,\ldots,x_k)_i$ and $f^*(y_1,\ldots,y_{\ell})_i$
refer to the $i$th output of $f$ and $f^*$, respectively.

\paragraph{Sum and copy.}
The function
$\FAOsum : \reals^{m}\times \cdots \times \reals^{m} \to \reals^m$
with $k$ inputs is represented by the FAO
$\Gamma = (f, \Phi_{f}, \Phi_{f^*})$,
where $f(x_1,\ldots,x_k) = x_1 + \cdots + x_k$.
The adjoint $f^*$ is the function
$\FAOcopy: \reals^{m} \to \reals^{m}\times \cdots \times \reals^{m}$,
which outputs $k$ copies of its input.
The algorithms $\Phi_f$ and $\Phi_{f^*}$ require $O(m+n)$ flops to
sum and copy their input, respectively,
using only $O(1)$ bytes of data to store the dimensions of $f$'s
input and output.
Here $n=km$.

\paragraph{Vstack and split.}
The function
$\vstack: \reals^{m_1} \times \cdots \times \reals^{m_k} \to \reals^{n}$
is represented by the FAO
$\Gamma = (f, \Phi_{f}, \Phi_{f^*})$,
where $f(x_1,\ldots,x_k)$
concatenates its $k$ inputs into a single vector output.
The adjoint $f^*$ is the function
$\FAOsplit: \reals^{n} \to \reals^{m_1} \times \cdots \times \reals^{m_k}$,
which divides a single vector into $k$ separate components.
The algorithms $\Phi_f$ and $\Phi_{f^*}$ simply reinterpret
their input as a differently sized output in $O(1)$ flops,
using only $O(1)$ bytes of data to store the dimensions of $f$'s input
and output. Here $n = m = m_1 + \cdots + m_k$.

\subsection{Additional examples}
The literature on fast linear transforms
goes far beyond the preceding examples.
In this section we highlight a few notable omissions.
Many methods have been developed for matrices derived from physical
systems.
The multigrid \cite{Hackbusch1985} and algebraic multigrid
\cite{brandt1985algebraic} methods efficiently apply
the inverse of a matrix representing discretized partial differential
equations (PDEs).
The fast multipole method accelerates multiplication by matrices
representing pairwise interactions \cite{GR:73,CGR:88},
much like the fast Gauss transform \cite{greengard1991fast}.
Heirarchical matrices are a matrix format that allows fast
multiplication by the matrix and its inverse,
with applications to discretized integral operators and PDEs
\cite{Hac:99,HKS:00,Borm2003}.

Many approaches exist for factoring an invertible sparse matrix into a
product of components whose inverses can be applied efficiently,
yielding a fast method for applying the inverse of the matrix
\cite{Duff:1986:DMS:18753,Davis:2006:DMS:1196434}.
A sparse LU factorization, for instance, decomposes an invertible
sparse matrix $A \in \reals^{n \times n}$ into the product $A=LU$
of a lower triangular matrix $L \in \reals^{n \times n}$
and an upper triangular matrix $U \in \reals^{n \times n}$.
The relationship between $\nnz(A)$, $\nnz(L)$, and $\nnz(U)$
is complex and depends on the factorization algorithm
\cite[Chap.~6]{Davis:2006:DMS:1196434}.

We only discussed 1-D and 2-D DFTs and convolutions,
but these and related transforms can be extended to
arbitrarily many dimensions \cite{DM:84,9780898712858}.
Similarly, many wavelet transforms naturally operate on data indexed
by more than two dimensions \cite{KV:92,YDC:05,LD:07}.

\section{Compositions}\label{composition_sec}

In this section we consider compositions of FAOs.
In fact we have already discussed several linear functions that are
naturally and efficiently represented as compositions,
such as multiplication by a low-rank matrix and
sparse matrix mappings.
Here though we present a data structure and algorithm for efficiently
evaluating any composition and its adjoint,
which gives us an FAO representing the composition.

A composition of FAOs can be
represented using a directed acyclic graph (DAG)
with exactly one node with no incoming edges (the start node)
and exactly one node with no outgoing edges (the end node).
We call such a representation an \emph{FAO DAG}.



Each node in the FAO DAG stores the following attributes:
\begin{itemize}
\item An FAO $\Gamma = (f, \Phi_f, \Phi_{f^*})$.
Concretely, $f$ is a symbol identifying the function,
and $\Phi_f$ and $\Phi_{f^*}$ are executable code.
\item The data needed to evaluate $\Phi_f$ and $\Phi_{f^*}$.
\item A list $E_\mathrm{in}$ of incoming edges.
\item A list $E_\mathrm{out}$ of outgoing edges.
\end{itemize}
Each edge has an associated array.
The incoming edges to a node store the arguments to the node's FAO.
When the FAO is evaluated, it writes the result to the node's
outgoing edges.
Matrix arguments and outputs are stored in column-major order on the
edge arrays.

As an example, figure \ref{basic_dag} shows the FAO DAG for
the composition $f(x) = Ax + Bx$, where
$A \in \reals^{m \times n}$ and $B \in \reals^{m \times n}$
are dense matrices.
The $\FAOcopy$ node duplicates the input $x \in \reals^n$
into the multi-argument output $(x,x) \in \reals^n \times \reals^n$.
The $A$ and $B$ nodes multiply by $A$ and $B$, respectively.
The $\FAOsum$ node sums two vectors together.
The $\FAOcopy$ node is the start node,
and the $\FAOsum$ node is the end node.
The FAO DAG requires $O(mn)$ bytes to store,
since the $A$ and $B$ nodes store the matrices $A$ and $B$ and
their tranposes.
The edge arrays also require $O(mn)$ bytes of memory.

\begin{figure}
\centering
\begin {tikzpicture}[-latex ,auto ,node distance =1.5 cm and 2.5cm ,on grid ,
semithick ,
state/.style ={ rectangle ,top color =white , bottom color = white,
draw,black , text=black , minimum width =1 cm, minimum height = 0.75cm}]
\node[state] (C) {copy};
\node[state] (A) [above left=of C] {$A$};
\node[state] (B) [above right =of C] {$B$};
\node[state] (D) [above left =of B] {sum};
\path (C) edge node[below =0.15 cm] {} (A);
\path (C) edge node[below =0.15 cm] {} (B);
\path (A) edge  node[above] {} (D);
\path (B) edge  node[below =0.15 cm] {} (D);
\end{tikzpicture}
\caption{The FAO DAG for $f(x) = Ax + Bx$.}\label{basic_dag}
\end{figure}
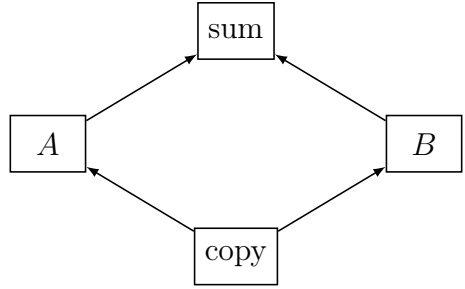

\subsection{Forward evaluation}
To evaluate the composition $f(x) = Ax + Bx$ using the FAO DAG in figure
\ref{basic_dag},
we first evaluate the start node on the input $x \in \reals^n$,
which copies $x$ onto both outgoing edges.
We evaluate the $A$ and $B$ nodes (serially or in parallel) on their
incoming edges, and write the results ($Ax$ and $Bx$) to
their outgoing edges.
Finally, we evaluate the end node on its incoming edges to obtain
the result $Ax + Bx$.

The general procedure for evaluating an FAO DAG is given in algorithm
\ref{fao_dag_eval_alg}.
The algorithm evaluates the nodes in a topological order.
The total flop count is the sum of the flops from evaluating the
algorithm $\Phi_f$ on each node.
If we allocate all scratch space needed by the FAO algorithms in advance,
then no memory is allocated during the algorithm.


\begin{algorithm}
\caption{Evaluate an FAO DAG.}\label{fao_dag_eval_alg}
\begin{algorithmic}
\Require{$G = (V, E)$ is an FAO DAG representing a function $f$.
$V$ is a list of nodes. $E$ is a list of edges.
$I$ is a list of inputs to $f$.
$O$ is a list of outputs from $f$.
Each element of $I$ and $O$ is represented as an array.}
\Statex
\State Create edges whose arrays are the elements of $I$
and save them as the list of incoming edges for the start node.
\State Create edges whose arrays are the elements of $O$
and save them as the list of outgoing edges for the end node.
\State Create an empty queue $Q$ for nodes that are ready to evaluate.
\State Create an empty set $S$ for nodes that have been evaluated.
\State Add $G$'s start node to $Q$.
\While {$Q$ is not empty}
    \State $u \gets \text{pop the front node of } Q$.
    \State Evaluate $u$'s algorithm $\Phi_f$ on $u$'s incoming edges,
    writing the result to $u$'s outgoing edges.
    \State Add $u$ to $S$.
    \For {each edge $e = (u,v)$ in $u$'s $E_\mathrm{out}$}
        \If {for all edges $(p,v)$ in $v$'s $E_\mathrm{in}$,
             $p$ is in $S$}
             \State Add $v$ to the end of $Q$.
        \EndIf
    \EndFor
\EndWhile
\Statex
\Ensure{$O$ contains the outputs of $f$ applied to inputs $I$.}
\end{algorithmic}
\end{algorithm}

\subsection{Adjoint evaluation}
Given an FAO DAG $G$ representing a function $f$,
we can easily generate an FAO DAG $G^*$ representing the adjoint $f^*$.
We modify each node in $G$,
replacing the node's FAO $(f, \Phi_f, \Phi_{f^*})$
with the FAO $(f^*, \Phi_{f^*}, \Phi_f)$
and swapping $E_\mathrm{in}$ and $E_\mathrm{out}$.
We also reverse the orientation of each edge in $G$.
We can apply algorithm \ref{fao_dag_eval_alg} to the resulting graph $G^*$ to evaluate $f^*$.
Figure \ref{adjoint_dag} shows the FAO DAG in figure \ref{basic_dag} transformed into an FAO DAG for the adjoint.

\begin{figure}
\centering
\begin {tikzpicture}[-latex ,auto ,node distance =1.5 cm and 2.5cm ,on grid ,
semithick ,
state/.style ={ rectangle ,top color =white , bottom color = white,
draw,black , text=black , minimum width =1 cm, minimum height = 0.75cm}]
\node[state] (C) {sum};
\node[state] (A) [above left=of C] {$A^T$};
\node[state] (B) [above right =of C] {$B^T$};
\node[state] (D) [above left =of B] {copy};
\path (A) edge node[below =0.15 cm] {} (C);
\path (B) edge node[below =0.15 cm] {} (C);
\path (D) edge  node[above] {} (A);
\path (D) edge  node[below =0.15 cm] {} (B);
\end{tikzpicture}
\caption{The FAO DAG for $f^*(u) = A^Tu + B^Tu$ obtained by transforming the FAO DAG in figure \ref{basic_dag}.}\label{adjoint_dag}
\end{figure}
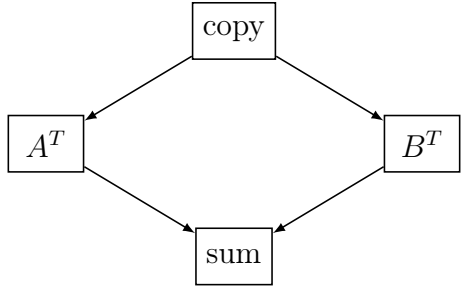

\subsection{Parallelism}
Algorithm \ref{fao_dag_eval_alg} can be easily parallelized,
since the nodes in the ready queue $Q$ can be evaluated in any order.
A simple parallel implementation could use a thread pool with $t$
threads to evaluate up to $t$ nodes in the ready queue at a time.
The extent to which parallelism speeds up evaluation of the composition
graph depends on how many parallel paths there are in the graph, \ie,
paths with no shared nodes.
The evaluation of individual nodes can also be parallelized by
replacing a node's algorithm $\Phi_f$ with a parallel variant.
For example, the standard algorithms for dense and sparse matrix
multiplication can be trivially parallelized.

\subsection{Optimizing the DAG}\label{opt_runtime}
The FAO DAG can often be transformed so that the output of
algorithm \ref{fao_dag_eval_alg} is the same but the algorithm is
executed more efficiently.
Such optimizations are especially important when the FAO DAG will
be evaluated on many different inputs
(as will be the case for matrix-free solvers, to be discussed later).
For example, the FAO DAG representing $f(x) = ABx + ACx$ where
$A,B,C \in \reals^{n \times n}$,
shown in figure \ref{ineff_dag},
can be transformed into the FAO DAG in figure \ref{eff_dag},
which requires one fewer multiplication by $A$.
The transformation is equivalent to rewriting $f(x) = ABx + ACx$ as
$f(x) = A(Bx + Cx)$.
Many other useful graph transformations can be derived from
the rewriting rules used in program analysis and code generation
\cite{Aho:2006:CPT:1177220}.

\begin{figure}
\centering
\begin {tikzpicture}[-latex ,auto ,node distance =1.5 cm and 2.5cm ,on grid ,
semithick ,
state/.style ={ rectangle ,top color =white , bottom color = white,
draw,black , text=black , minimum width =1 cm, minimum height = 0.75cm}]
\node[state] (C) {copy};
\node[state] (A) [above left=of C] {$B$};
\node[state] (B) [above right =of C] {$C$};
\node[state] (E) [above =of A] {$A$};
\node[state] (F) [above =of B] {$A$};
\node[state] (D) [above left =of F] {sum};
\path (C) edge node[below =0.15 cm] {} (A);
\path (C) edge node[below =0.15 cm] {} (B);
\path (A) edge node[below =0.15 cm] {} (E);
\path (B) edge node[below =0.15 cm] {} (F);
\path (E) edge  node[above] {} (D);
\path (F) edge  node[below =0.15 cm] {} (D);
\end{tikzpicture}
\caption{The FAO DAG for $f(x) = ABx + ACx$.}\label{ineff_dag}
\end{figure}
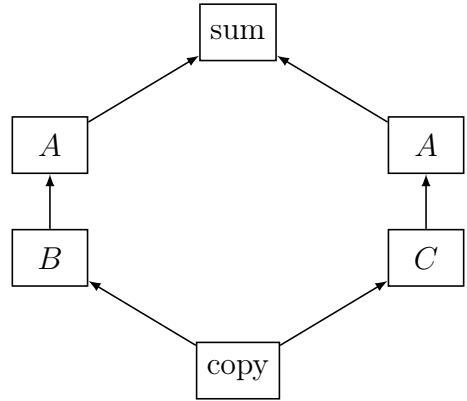

\begin{figure}
\centering
\begin {tikzpicture}[-latex ,auto ,node distance =1.5 cm and 2.5cm ,on grid ,
semithick ,
state/.style ={ rectangle ,top color =white , bottom color = white,
draw,black , text=black , minimum width =1 cm, minimum height = 0.75cm}]
\node[state] (C) {copy};
\node[state] (A) [above left=of C] {$B$};
\node[state] (B) [above right =of C] {$C$};
\node[state] (D) [above left =of B] {sum};
\node[state] (E) [above =of D] {$A$};
\path (C) edge node[below =0.15 cm] {} (A);
\path (C) edge node[below =0.15 cm] {} (B);
\path (A) edge node[above] {} (D);
\path (B) edge node[below =0.15 cm] {} (D);
\path (D) edge node[below =0.15 cm] {} (E);
\end{tikzpicture}
\caption{The FAO DAG for $f(x) = A(Bx + Cx)$.}\label{eff_dag}
\end{figure}
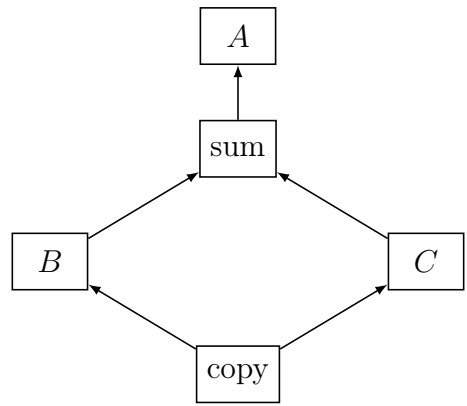

Sometimes graph transformations will involve pre-computation.
For example, if two nodes representing the composition
$f(x) = b^Tcx$, where $b,c \in \reals^n$, appear in an FAO DAG,
the DAG can be made more efficient by evaluating $\alpha = b^Tc$
and replacing the two nodes
with a single node for scalar multiplication by $\alpha$.

The optimal rewriting of a DAG will depend on the hardware and overall
architecture on which the multiplication algorithm is being run.
For example, if the algorithm is being run on a distributed computing
cluster then a node representing multiplication by a large matrix
\[
A = \left[\begin{array}{cc}
A_{11} & A_{12} \\
A_{21} & A_{22}
\end{array}\right],
\]
could be split into separate nodes for each block, with the nodes stored
 on different computers.
This rewriting would be necessary if the matrix $A$ is so large it
cannot be stored on a single machine.
The literature on optimizing compilers suggests many approaches
to optimizing an FAO DAG for evaluation on a particular architecture
\cite{Aho:2006:CPT:1177220}.

\subsection{Reducing the memory footprint}\label{opt_memory}
In a naive implementation, the total bytes needed to represent an FAO
DAG $G$, with node set $V$ and edge set $E$, is the sum of the bytes of data on each node $u \in V$
and the bytes of memory needed for the array on each edge $e \in E$.
A more sophisticated approach can substantially reduce the
memory needed.
For example, when the same FAO occurs more than once in $V$,
duplicate nodes can share data.

We can also reuse memory across edge arrays.
The key is determining which arrays can never be in use
at the same time during algorithm \ref{fao_dag_eval_alg}.
An array for an edge $(u,v)$ is in use if node
$u$ has been evaluated but node $v$ has not been evaluated.
The arrays for edges $(u_1,v_1)$ and $(u_2,v_2)$ can never
be in use at the same time if and only if there is a directed path from
$v_1$ to $u_2$ or from $v_2$ to $u_1$.
If the sequence in which the nodes will be evaluated is fixed,
rather than following an unknown topological ordering,
then we can say precisely which arrays will be in use at the same time.

The next step is to map the edge arrays onto a global array,
keeping the global array as small as possible.
Let $L(e)$ denote the length of edge $e$'s array and
$U \subseteq E \times E$ denote the set of pairs of edges
whose arrays may be in use at the same time.
Formally, we want to solve the optimization problem
\begin{equation}\label{edge_array_prob}
\begin{array}{ll}
\mbox{minimize}   & \max_{e \in E}\{z_e + L(e)\} \\
\mbox{subject to} & [z_{e},z_{e} + L(e) - 1]\cap
[z_{f},z_{f} + L(f) - 1] = \emptyset, \quad (e,f) \in U \\
& z_e \in \{1,2,\ldots \}, \quad e \in E,
\end{array}
\end{equation}
where the $z_e$ are the optimization variables and represent
the index in the global array where edge $e$'s array begins.

When all the edge arrays are the same length, problem
(\ref{edge_array_prob}) is equivalent to finding the chromatic number
of the graph with vertices $E$ and edges $U$.
Problem (\ref{edge_array_prob}) is thus NP-hard in general \cite{Kar:72}.
A reasonable heuristic for problem (\ref{edge_array_prob}) is to first
find a graph coloring of $(E,U)$ using one of the many efficient
algorithms for finding graph colorings that use a small number of
colors; see, \eg, \cite{Hal:93,Bre:79}.
We then have a mapping $\phi$ from colors to sets of edges assigned
to the same color.
We order the colors arbitrarily as $c_1,\ldots,c_k$ and assign the
$z_e$ as follows:
\[
z_e = \begin{cases}
1, \quad e \in \phi(c_1) \\
\max_{f \in \phi(c_{i-1})}\{z_f + L(f) \},
\quad e \in \phi(c_i), \quad i > 1.
\end{cases}
\]

Additional optimizations can be made based on the unique characteristics
of different FAOs. For example, the outgoing edges from a $\FAOcopy$
node can share the incoming edge's array until the outgoing edges'
arrays are written to (\ie, copy-on-write).
Another example is that the outgoing edges from a $\FAOsplit$ node
can point to segments of the array on the incoming edge.
Similarly, the incoming edges on a $\vstack$ node can point to
segments of the array on the outgoing edge.


\subsection{Software implementations}
Several software packages have been developed
for constructing and evaluating compositions of linear functions.
The MATLAB toolbox SPOT allows users to construct expressions involving
both fast transforms, like convolution and the DFT,
and standard matrix multiplication \cite{SPOT}.
TFOCS, a framework in MATLAB for solving convex problems using a variety
of first-order algorithms,
provides functionality for constructing and composing FAOs \cite{BCG:11}.
The Python package \texttt{linop} provides methods for constructing FAOs
and combining them into linear expressions \cite{linop}.
Halide is a domain specific language for image processing
that makes it easy to optimize compositions of fast transforms for a
variety of architectures \cite{Halide}.

\section{Cone programs and solvers}\label{cone_prog_and_solvers_sec}

\subsection{Cone programs}\label{cone_prog_sec}

A cone program is a convex optimization problem of the form
\begin{equation}\label{cone_prog}
\begin{array}{ll}
\mbox{minimize}   & c^{T}x \\
\mbox{subject to} & Ax + b  \in \K,
\end{array}
\end{equation}
where $x \in \reals^n$ is the optimization variable,
$\K$ is a convex cone,
and $A \in \reals^{m \times n}$,
$c \in \reals^n$,
and $b \in \reals^m$ are problem data.
Cone programs are a broad class that include linear programs, second-order cone programs, and semidefinite programs as special cases
\cite{NesNem:92,BoV:04}.
We call the cone program \emph{matrix-free} if $A$ is
represented implicitly as an FAO,
rather than explicitly as a dense or sparse matrix.

The convex cone $\K$ is typically a Cartesian product of simple convex
cones from the following list:
\begin{itemize}
\item Zero cone: $\K_{0} = \{0\}$.
\item Free cone: $\K_{\mathrm{free}} = \reals$.
\item Nonnegative cone:
$\K_{+} = \{x \in \reals \mid x \geq 0 \}$.
\item Second-order cone:
$\K_{\mathrm{soc}} =
\{ (x,t) \in \reals^{n+1} \mid x \in \reals^n,
\; t \in \reals, \;  \|x\|_2 \leq t \}$.
\item Positive semidefinite cone:
$\K_{\mathrm{psd}} = \{\vect(X) \mid X \in \symm^n, \; z^TXz \geq 0
\mbox{ for all } z \in \reals^n \}$.
\item Exponential cone (\cite[\S 6.3.4]{parikh2013proximal}):
\[
\K_{\mathrm{exp}} =
\{(x, y, z) \in \reals^3
\mid y > 0, \; y e^{x/y} \leq z\}
\cup \{(x, y, z) \in \reals^3
\mid x \leq 0, \; y = 0, \; z \geq 0\}.
\]
\item Power cone (\cite{nes:06,sy:14,KH:14}):
\[
\K_{\mathrm{pwr}}^a =
\{(x, y, z) \in \reals^3 \mid x^ay^{(1-a)} \geq |z|, \; x \geq 0, \; y \geq 0 \},
\]
where $a \in [0,1]$.
\end{itemize}
These cones are useful in expressing common problems
(via canonicalization), and can be handled by various solvers (as discussed
below).
Note that all the cones are subsets of $\reals^n$,
\ie, real vectors.
It might be more natural to view the elements of a cone as matrices or tuples,
but viewing the elements as vectors simplifies the matrix-free  canonicalization algorithm in \S\ref{canon_sec}.

Cone programs that include only cones from certain subsets of the list
above have special names.
For example, if the only cones are zero, free,
and nonnegative cones, the cone program is a linear program;
if in addition it includes the second-order cone,
it is called a second-order cone program.
A well studied special case is so-called symmetric cone programs,
which include the zero, free, nonnegative, second-order,
and positive semidefinite cones.
Semidefinite programs, where the cone constraint consists of a single
positive semidefinite cone, are another common case.

\subsection{Cone solvers}\label{cone_solvers_sec}

Many methods have been developed to solve cone programs,
the most widely used being interior-point methods; see, \eg,
\cite{BoV:04,NN:94,NW:06,Wright:87,Ye:11}.

\paragraph{Interior-point.}
A large number of interior-point cone solvers have been implemented.
Most support symmetric cone programs.
SDPT3 \cite{toh1999sdpt3} and SeDuMi \cite{Sturm1999}
are open-source solvers implemented in MATLAB;
CVXOPT \cite{CVXOPT}
is an open-source solver implemented in Python;
MOSEK \cite{mosek}
is a commercial solver with interfaces to many languages.
ECOS is an open-source cone solver written in library-free C that
supports second-order cone programs
\cite{bib:Domahidi2013ecos};
Akle extended ECOS to support the exponential cone \cite{Akle15}.
DSDP5 \cite{dsdp5} and SDPA \cite{SDPA} are open-source solvers
for semidefinite programs implemented in C and C++, respectively.

\paragraph{First-order.}
First-order methods are an alternative to interior-point methods that
scale more easily to large cone programs,
at the cost of lower accuracy.
PDOS \cite{CDPB:13} is a first-order cone solver based on the
alternating direction method of multipliers (ADMM) \cite{BP:11}.
PDOS supports second-order cone programs.
POGS \cite{fougner2015parameter} is an ADMM based solver that runs
on a GPU, with a version that is similar to PDOS and
targets second-order cone programs.
SCS is another ADMM-based cone solver,
which supports symmetric cone programs as well as
the exponential and power cones \cite{SCSpaper}.
Many other first-order algorithms can be applied to cone programs
(\eg, \cite{LLM:11,CP:11,pock2011diagonal}),
but none have been implemented as a robust, general purpose cone solver.

\paragraph{Matrix-free.}
Matrix-free cone solvers are an area of active research,
and a small number have been developed.
PENNON is a matrix-free semidefinite program (SDP) solver \cite{KS:09}.
PENNON solves a series of unconstrained optimization problems using
Newton's method.
The Newton step is computed using a preconditioned conjugate gradient
method, rather than by factoring the Hessian directly.
Many other matrix-free algorithms for solving SDPs have been proposed
(\eg, \cite{choi2000solving,FKM:02,T:04,ZST:10}).

Several matrix-free solvers have been developed for quadratic programs
(QPs), which are a superset of linear programs and a
subset of second-order cone programs.
Gondzio developed a matrix-free interior-point method
for QPs that solves linear systems using a
preconditioned conjugate gradient method
\cite{GondzioMatFree:12,Gondzio:12,HeS:52}.
PDCO is a matrix-free interior-point solver that can solve QPs
\cite{PDCO}, using LSMR to solve linear systems \cite{fong2011lsmr}.

\section{Matrix-free canonicalization}\label{canon_sec}


\subsection{Canonicalization}

Canonicalization is an algorithm that takes as input a
data structure representing a general convex optimization
problem and outputs a data structure representing an equivalent
cone program.
By solving the cone program, we recover the solution
to the original optimization problem.
This approach is used by convex optimization modeling systems such as
YALMIP \cite{Lofberg:04}, CVX \cite{cvx}, CVXPY \cite{cvxpy_paper},
and Convex.jl \cite{cvxjl}.
The same technique is used in the code generators CVXGEN \cite{MB:12}
and QCML \cite{QCML}.

The downside of canonicalization's generality is that special structure
in the original problem may be lost during the transformation into a
cone program.
In particular, current methods of canonicalization convert
fast linear transforms in the original
problem into multiplication by a dense or sparse matrix,
which makes the final cone program far more costly to solve than
the original problem.

The canonicalization algorithm can be modified, however, so that fast
linear transforms are preserved.
The key is to represent all linear functions arising during the
canonicalization process as FAO DAGs instead of as sparse matrices.
The FAO DAG representation of the final cone program can be used by
a matrix-free cone solver to solve the cone program.
The modified canonicalization algorithm never forms explicit matrix
representations of linear functions.
Hence we call the algorithm \emph{matrix-free canonicalization}.

The remainder of this section has the following outline:
In \S\ref{informal_overview_sec} we give an informal
overview of the matrix-free canonicalization algorithm.
In \S\ref{expr_tree_sec} we define the expression DAG data structure,
which is used throughout the matrix-free canonicalization algorithm.
In \S\ref{cvx_prog_sec} we define the data structure used to represent
convex optimization problems as input to the algorithm.
In \S\ref{cone_prog_repr_sec}
we define the representation of a cone program
output by the matrix-free canonicalization algorithm.
In \S\ref{algorithm_sec} we present the matrix-free canonicalization
algorithm itself.

For clarity, we move some details of canonicalization to the appendix.
In \S\ref{equivalence_sec} we give a precise definition of the
equivalence between the cone program output by
the canonicalization algorithm and the original convex optimization
problem given as input.
In \S\ref{sparse_mat_sec} we explain how the standard canonicalization
algorithm generates a sparse matrix representation of a cone program.

\subsection{Informal overview}\label{informal_overview_sec}
In this section we give an informal overview of the matrix-free
canonicalization algorithm.
Later sections define the data structures used in the algorithm
and make the procedure described in this section formal and explicit.

We are given an optimization problem
\begin{equation}\label{general_opt_prob}
\begin{array}{ll}
\mbox{minimize} & f_0(x) \\
\mbox{subject to} & f_i(x) \leq 0, \quad i=1,\ldots,p \\
& h_i(x) + d_i = 0, \quad i=1,\ldots,q,
\end{array}
\end{equation}
where $x \in \reals^n$ is the optimization variable,
$f_0:\reals^n \to \reals,\ldots,f_p:\reals^n \to \reals$ are convex
functions, $h_1:\reals^n \to \reals^{m_1},\ldots,
h_q:\reals^n \to \reals^{m_q}$ are linear functions,
and $d_1 \in \reals^{m_1}, \ldots, d_q \in \reals^{m_q}$ are
vector constants.
Our goal is to convert the problem into an equivalent matrix-free
cone program, so that we can solve it using a matrix-free cone solver.

We assume that the problem satisfies a set of requirements known as
\emph{disciplined convex programming} \cite{Grant2004,GBY:06}.
The requirements ensure that each of the $f_0,\ldots,f_p$ can be
represented as partial minimization over a cone program.
Let each function $f_i$ have the cone program representation
\[
\begin{array}{lcll}
f_i(x) &= &\mbox{minimize} ~(\text{over } t^{(i)}) &
g^{(i)}_0(x,t^{(i)}) + e^{(i)}_0 \\
& &\mbox{subject to} & g^{(i)}_j(x,t^{(i)}) + e^{(i)}_j \in \K^{(i)}_j,
\quad j=1,\ldots,r^{(i)}, \\
\end{array}
\]
where $t^{(i)} \in \reals^{s^{(i)}}$ is the optimization variable,
$g^{(i)}_0,\ldots,g^{(i)}_{r^{(i)}}$
are linear functions,
$e^{(i)}_0,\ldots,e^{(i)}_{r^{(i)}}$ are vector constants,
and $\K^{(i)}_1,\ldots,\K^{(i)}_{r^{(i)}}$ are convex cones.

We rewrite problem (\ref{general_opt_prob}) as the equivalent
cone program
\begin{equation}\label{general_cone_prog}
\begin{array}{ll}
\mbox{minimize} & g^{(0)}_0(x, t^{(0)}) + e^{(0)}_0  \\
\mbox{subject to} & -g^{(i)}_0(x, t^{(i)}) - e^{(i)}_0 \in \K_{+},
\quad i=1,\ldots,p, \\
& g^{(i)}_j(x, t^{(i)}) + e^{(i)}_j \in \K^{(i)}_j
\quad i=1,\ldots,p, \quad  j=1,\ldots,r^{(i)} \\
& h_i(x) + d_i \in \K_{0}^{m_i}, \quad i=1,\ldots,q.
\end{array}
\end{equation}
We convert problem (\ref{general_cone_prog}) into the standard form
for a matrix-free cone program given in (\ref{cone_prog}) by
representing $g^{(0)}_0$ as the inner product with a vector
$c \in \reals^{n + s^{(0)}}$,
concatenating the $d_i$ and $e^{(i)}_j$ vectors into a
single vector $b$,
and representing the matrix $A$ implicitly as the linear function
that stacks the outputs of all the $h_i$ and $g^{(i)}_j$
(excluding the objective $g^{(0)}_0$) into a single vector.

\subsection{Expression DAGs}\label{expr_tree_sec}

The canonicalization algorithm uses a data structure called an
\emph{expression DAG} to represent functions in an optimization problem.
Like the FAO DAG defined in \S\ref{composition_sec},
an expression DAG encodes a composition of functions as a DAG
where a node represents a function
and an edge from a node $u$ to a node $v$ signifies that an output of
$u$ is an input to $v$.
Figure \ref{basic_expr_dag} shows an expression DAG for the composition
$f(x) = \|Ax\|_2 + 3$, where $x \in \reals^n$ and
$A \in \reals^{m \times n}$.

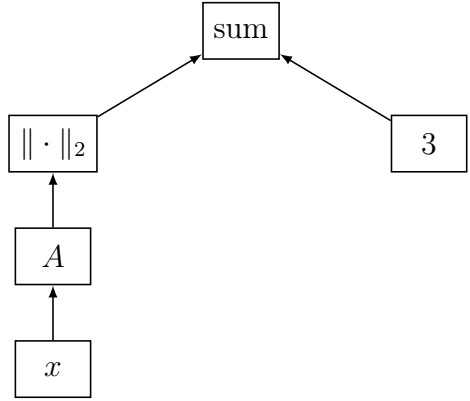
\begin{figure}
\centering
\begin {tikzpicture}[-latex ,auto ,node distance =1.5 cm and 2.5 cm,
on grid ,semithick ,
state/.style ={ rectangle ,top color =white , bottom color = white,
draw,black , text=black , minimum width =1 cm, minimum height = 0.75cm}]
\node[state] (A) {$x$};
\node[state] (B) [above =of A] {$A$};
\node[state] (C) [above =of B] {$\|\cdot\|_2$};
\node[state] (D) [above right=of C] {sum};
\node[state] (E) [below right =of D] {$3$};
\path (A) edge node[below =0.15 cm] {} (B);
\path (B) edge node[below =0.15 cm] {} (C);
\path (C) edge node[below =0.15 cm] {} (D);
\path (E) edge node[below =0.15 cm] {} (D);
\end{tikzpicture}
\caption{The expression DAG for $f(x) = \|Ax\|_2 + 3$.}\label{basic_expr_dag}
\end{figure}

Formally, an expression DAG is a connected DAG with one node with
no outgoing edges (the end node) and one or more nodes with no
incoming edges (start nodes).
Each node in an expression DAG has the following attributes:
\begin{itemize}
\item A symbol representing a function $f$.
\item The data needed to parameterize the function,
such as the power $p$ for the function $f(x) = x^p$.
\item A list $E_\mathrm{in}$ of incoming edges.
\item A list $E_\mathrm{out}$ of outgoing edges.
\end{itemize}

Each start node in an expression DAG is either a constant function or
a variable.
A variable is a symbol that labels a node input.
If two nodes $u$ and $v$ both have incoming edges from variable nodes
with symbol $t$, then the inputs to $u$ and $v$ are the same.

We say an expression DAG is affine if every non-start node represents
a linear function.
If in addition every start node is a variable, we say the expression
DAG is linear.
We say an expression DAG is constant if it contains no variables, \ie,
every start node is a constant.

\subsection{Optimization problem representation}\label{cvx_prog_sec}

An \emph{optimization problem representation} (OPR) is a data structure
that represents a convex optimization problem.
The input to the matrix-free canonicalization algorithm is an OPR.
An OPR can encode any mathematical optimization problem of the form
\begin{equation}\label{opr_prob}
\begin{array}{ll} \mbox{minimize}~(\text{over } y~\mathrm{w.r.t.}~\K_0)
& f_0(x,y)\\
\mbox{subject to} & f_i(x,y) \in \K_i, \quad i=1, \ldots, \ell,
\end{array}
\end{equation}
where $x \in \reals^n$ and $y \in \reals^m$ are the optimization
variables,
$\K_0$ is a proper cone,
$\K_1,\ldots,\K_\ell$ are convex cones,
and for $i=0,\ldots,\ell$,
we have $f_i : \reals^n \times \reals^m \to \reals^{m_i}$ where
$\K_i \subseteq \reals^{m_i}$.
(For background on convex optimization with respect to a cone, see,
\eg, \cite[\S 4.7]{BoV:04}.)

Problem (\ref{opr_prob}) is more complicated than the standard
definition of a convex optimization problem given in
(\ref{general_opt_prob}).
The additional complexity is necessary so that OPRs can encode partial
minimization over cone programs,
which can involve minimization with respect to a cone and constraints
other than equalities and inequalities.
These partial minimization problems play a major role in the
canonicalization algorithm.
Note that we can easily represent equality and inequality
constraints using the zero and nonnegative cones.

Concretely, an OPR is a tuple $(s,o,C)$ where
\begin{itemize}
 \item The element $s$ is a tuple $(V,\K)$ representing the problem's
 objective sense.
 The element $V$ is a set of symbols encoding the variables being
 minimized over.
 The element $\K$ is a symbol encoding the proper cone the problem
 objective is being minimized with respect to.
 \item The element $o$ is an expression DAG representing the problem's
 objective function.
 \item The element $C$ is a set representing the problem's constraints.
 Each element $c_i \in C$ is a tuple $(e_i, \K_i)$ representing a
 constraint of the form $f(x,y) \in \K$.
 The element $e_i$ is an expression DAG representing the function $f$
 and $\K_i$ is a symbol encoding the convex cone $\K$.
\end{itemize}

The matrix-free canonicalization algorithm can only operate on OPRs
that satisfy the two DCP requirements \cite{Grant2004,GBY:06}.
The first requirement is that each nonlinear function in the OPR
have a known representation as partial minimization over a cone program.
See \cite{GB:08} for many examples of such representations.

The second requirement is that the objective $o$ be verifiable as
convex with respect to the cone $\K$ in the objective sense $s$ by the
DCP composition rule.
Similarly, for each element $(e_i, \K_i) \in C$,
the constraint that the function represented by $e_i$ lie in the convex
cone represented by $\K_i$
must be verifiable as convex by the composition rule.
The DCP composition rule determines the curvature
of a composition $f(g_1(x),\ldots,g_k(x))$ from the curvatures and
ranges of the arguments $g_1,\ldots,g_k$,
the curvature of the function $f$,
and the monotonicity of $f$ on the range of its arguments.
See \cite{Grant2004} and \cite{cvxjl} for a full discussion of the DCP
composition rule.
Additional rules are used to determine the range of a composition
from the range of its arguments.


Note that it is not enough for the objective and constraints to be
convex.
They must also be structured so that the DCP composition rule can verify
their convexity.
Otherwise the cone program output by the matrix-free canonicalization
algorithm is not guaranteed to be equivalent to the original problem.

To simplify the exposition of the canonicalization algorithm,
we will also require that the objective sense $s$ represent
minimization over all the variables in the problem with respect to the
nonnegative cone, \ie,
the standard definition of minimization.
The most general implementation of canonicalization would also accept
OPRs that can be transformed into an equivalent OPR with an objective
sense that meets this requirement.

\subsection{Cone program representation}\label{cone_prog_repr_sec}

The matrix-free canonicalization algorithm outputs a tuple
$(c_{\mathrm{arr}},d_{\mathrm{arr}},b_{\mathrm{arr}},G,
\K_{\mathrm{list}})$ where
\begin{itemize}
\item The element $c_{\mathrm{arr}}$ is a length $n$ array representing
a vector $c \in \reals^n$.
\item The element $d_{\mathrm{arr}}$ is a length one array representing
a scalar $d \in \reals$.
\item The element $b_{\mathrm{arr}}$ is a length $m$ array representing
a vector $b \in \reals^m$.
\item The element $G$ is an FAO DAG representing a linear function
$f(x) = Ax$, where $A \in \reals^{m \times n}$.
\item The element $\K_{\mathrm{list}}$ is a list of symbols representing
the convex cones $(\K_1, \ldots, \K_\ell)$ .
\end{itemize}
The tuple represents the matrix-free cone program
\begin{equation}\label{cone_repr_prog}
\begin{array}{ll}
\mbox{minimize}   & c^{T}x + d \\
\mbox{subject to} & Ax + b  \in \K,
\end{array}
\end{equation}
where $\K = \K_1 \times \cdots \times \K_\ell$.

We can use the FAO DAG $G$ and algorithm
\ref{fao_dag_eval_alg} to represent $A$ as an FAO,
\ie, export methods for multiplying by $A$ and $A^T$.
These two methods are all a matrix-free
cone solver needs to efficiently solve problem (\ref{cone_repr_prog}).

\subsection{Algorithm}\label{algorithm_sec}

The matrix-free canonicalization algorithm can be broken down into
subroutines.
We describe these subroutines before presenting the overall algorithm.

\paragraph{\texttt{Conic-Form}.}
The \texttt{Conic-Form} subroutine takes an OPR as input and returns
an equivalent OPR where every non-start node in the objective and
constraint expression DAGs represents a linear function.
The output of the \texttt{Conic-Form} subroutine represents a
cone program,
but the output must still be transformed into a data structure that a
cone solver can use,
\eg~the cone program representation described in
\S\ref{cone_prog_repr_sec}.

The general idea of the \texttt{Conic-Form} algorithm is to replace
each nonlinear function in the OPR with an OPR representing partial
minimization over a cone program.
Recall that the canonicalization algorithm requires that all
nonlinear functions in the problem be representable as partial
minimization over a cone program.
The OPR for each nonlinear function is spliced into the full OPR.
We refer the reader to \cite{GB:08} and \cite{cvxjl} for a full
discussion of the \texttt{Conic-Form} algorithm.


The \texttt{Conic-Form} subroutine preserves fast linear transforms
in the problem.
All linear functions in the original OPR are present in the OPR output
by \texttt{Conic-Form}.
The only linear functions added are ones like $\FAOsum$ and scalar
multiplication that are very efficient to evaluate.
Thus, evaluating the FAO DAG representing the final cone program will be
as efficient as evaluating all the linear functions in the original
problem.

\paragraph{\texttt{Linear} and \texttt{Constant}.}
The \texttt{Linear} and \texttt{Constant} subroutines take an affine
expression DAG as input and return the DAG's linear and constant
components, respectively.
Concretely, the \texttt{Linear} subroutine returns a copy of the
input DAG where every constant start node is replaced with a variable
start node and a node mapping the variable output to a vector
(or matrix) of zeros with the same dimensions as the constant.
The \texttt{Constant} subroutine returns a copy of the input DAG
where every variable start node is replaced with a zero-valued constant
node of the same dimensions.
Figures \ref{linear_dag} and \ref{constant_dag} show the results of
applying the \texttt{Linear} and
\texttt{Constant} subroutines to an expression DAG representing
$f(x) = x + 2$, as depicted in figure \ref{initial_affine_dag}.

\begin{figure}
\centering
\begin {tikzpicture}[-latex ,auto ,node distance =1.5 cm and 2.5 cm,
on grid ,semithick ,
state/.style ={ rectangle ,top color =white , bottom color = white,
draw,black , text=black , minimum width =1 cm, minimum height = 0.75cm}]
\node[state] (A) {$x$};
\node[state] (C) [above right=of A] {sum};
\node[state] (B) [below right=of C] {$2$};
\path (A) edge node[below =0.15 cm] {} (C);
\path (B) edge node[below =0.15 cm] {} (C);
\end{tikzpicture}
\caption{The expression DAG for $f(x) = x + 2$.}\label{initial_affine_dag}
\end{figure}
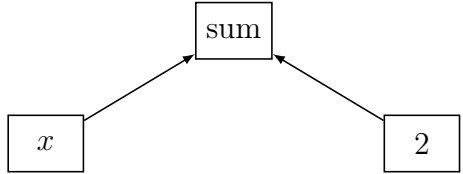

\begin{figure}
\centering
\begin {tikzpicture}[-latex ,auto ,node distance =1.5 cm and 2.5 cm,
on grid ,semithick ,
state/.style ={ rectangle ,top color =white , bottom color = white,
draw,black , text=black , minimum width =1 cm, minimum height = 0.75cm}]
\node[state] (A) {$x$};
\node[state] (C) [above right=of A] {sum};
\node[state] (B) [below right=of C] {$0$};
\node[state] (D) [below =of B] {$x$};
\path (A) edge node[below =0.15 cm] {} (C);
\path (B) edge node[below =0.15 cm] {} (C);
\path (D) edge node[below =0.15 cm] {} (B);
\end{tikzpicture}
\caption{The \texttt{Linear} subroutine applied to the expression DAG in
figure \ref{initial_affine_dag}.}\label{linear_dag}
\end{figure}

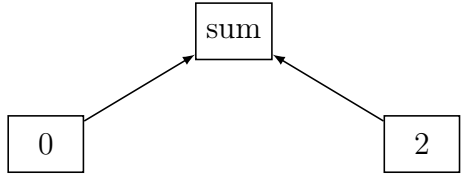
\begin{figure}
\centering
\begin {tikzpicture}[-latex ,auto ,node distance =1.5 cm and 2.5 cm,
on grid ,semithick ,
state/.style ={ rectangle ,top color =white , bottom color = white,
draw,black , text=black , minimum width =1 cm, minimum height = 0.75cm}]
\node[state] (A) {$0$};
\node[state] (C) [above right=of A] {sum};
\node[state] (B) [below right=of C] {$2$};
\path (A) edge node[below =0.15 cm] {} (C);
\path (B) edge node[below =0.15 cm] {} (C);
\end{tikzpicture}
\caption{The \texttt{Constant} subroutine applied to the expression DAG
in figure \ref{initial_affine_dag}.}\label{constant_dag}
\end{figure}

\paragraph{\texttt{Evaluate}.}
The \texttt{Evaluate} subroutine takes a constant expression DAG as
input and returns an array.
The array contains the value of the function represented by the
expression DAG.
If the DAG evaluates to a matrix $A \in \reals^{m \times n}$,
the array represents $\vect(A)$.
Similarly, if the DAG evaluates to multiple output vectors
$(b_1, \ldots, b_k) \in \reals^{n_1} \times \cdots \times \reals^{n_k}$,
the array represents $\vstack(b_1, \ldots, b_k)$.
For example, the output of the \texttt{Evaluate} subroutine on the
expression DAG in figure \ref{constant_dag} is a length one array with
first entry equal to $2$.


\paragraph{\texttt{Graph-Repr}.}
The \texttt{Graph-Repr} subroutine takes a list of linear expression
DAGs, $(e_1,\ldots,e_\ell)$, and an ordering over the variables in the
expression DAGs, $<_{V}$,
as input and outputs an FAO DAG $G$.
We require that the end node of each expression DAG represent a
function with a single vector as output.

We construct the FAO DAG $G$ in three steps.
In the first step, we combine the expression DAGs into a single
expression DAG $H^{(1)}$ by creating a $\vstack$ node
and adding an edge from the end node of each expression DAG to
the new node.
The expression DAG $H^{(1)}$ is shown in figure \ref{combo_dag}.

\begin{figure}
\centering
\begin {tikzpicture}[-latex ,auto ,node distance =1.5 cm and 2.5cm ,
on grid ,semithick ,
state/.style ={ rectangle ,top color =white , bottom color = white,
draw,black , text=black , minimum width =1 cm, minimum height = 0.75cm}]
\node[state] (A) {$e_1$};
\node (B) [right = of A] {\mydots};
\node[state] (C) [right = of B] {$e_\ell$};
\node[state] (D) [above =of B] {vstack};
\path (A) edge node[below =0.15 cm] {} (D);
\path (C) edge node[below =0.15 cm] {} (D);
\end{tikzpicture}
\caption{The expression DAG for $\vstack(e_1, \ldots, e_\ell)$.}\label{combo_dag}
\end{figure}
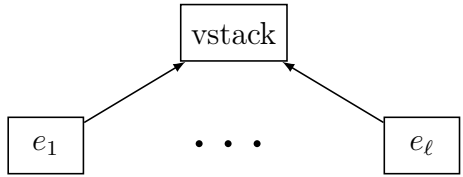

In the second step, we transform $H^{(1)}$ into an expression DAG
$H^{(2)}$ with a single start node.
Let $x_1,\ldots,x_k$ be the variables in $(e_1,\ldots,e_\ell)$ ordered
by $<_{V}$.
Let $n_i$ be the length of $x_i$ if the variable is a vector and of
$\vect(x_i)$ if the variable is a matrix, for $i=1,\ldots,k$.
We create a start node representing the function
$\FAOsplit: \reals^n \to \reals^{n_1} \times \cdots \times \reals^{n_k}$.
For each variable $x_i$,
we add an edge from output $i$ of the start node to a $\FAOcopy$ node
and edges from that $\FAOcopy$ node to all the nodes representing $x_i$.
If $x_i$ is a vector, we replace all the nodes representing $x_i$ with
nodes representing the identity function.
If $x_i$ is a matrix, we replace all the nodes representing $x_i$ with
$\mat$ nodes.
The transformation from $H^{(1)}$ to $H^{(2)}$ when $\ell=1$ and $e_1$
represents $f(x) = x + A(x+y)$,
where $x,y \in \reals^n$ and $A \in \reals^{n \times n}$,
are depicted in figures \ref{pre_transform_dag} and
\ref{post_transform_dag}.

In the third and final step, we transform $H^{(2)}$ from an expression
DAG into an FAO DAG $G$.
$H^{(2)}$ is almost an FAO DAG, since each node represents a linear
function and the DAG has a single start and end node.
To obtain $G$ we simply add the node and edge attributes needed in an
FAO DAG.
For each node $u$ in $H^{(2)}$ representing the function $f$,
we add to $u$ an FAO $(f, \Phi_f, \Phi_{f^*})$
and the data needed to evaluate $\Phi_f$ and $\Phi_{f^*}$.
The node already has the required lists of incoming and outgoing edges.
We also add an array to each of $H^{(2)}$'s edges.

\begin{figure}
\centering
\begin {tikzpicture}[-latex ,auto ,node distance =1.5 cm and 2.5 cm,
on grid ,semithick ,
state/.style ={ rectangle ,top color =white , bottom color = white,
draw,black , text=black , minimum width =1 cm, minimum height = 0.75cm}]
\node[state] (A) {$x$};
\node[state] (C) [above right=of A] {sum};
\node[state] (B) [below right=of C] {$A$};
\node[state] (G) [below =of B] {sum};
\node[state] (F) [below left=of G] {$x$};
\node[state] (D) [below right=of G] {$y$};
\node[state] (E) [above =of C] {vstack};
\path (A) edge node[below =0.15 cm] {} (C);
\path (B) edge node[below =0.15 cm] {} (C);
\path (C) edge node[below =0.15 cm] {} (E);
\path (D) edge node[below =0.15 cm] {} (G);
\path (F) edge node[below =0.15 cm] {} (G);
\path (G) edge node[below =0.15 cm] {} (B);
\end{tikzpicture}
\caption{The expression DAG $H^{(1)}$ when $\ell=1$ and $e_1$ represents
$f(x,y) = x + A(x + y)$.}\label{pre_transform_dag}
\end{figure}
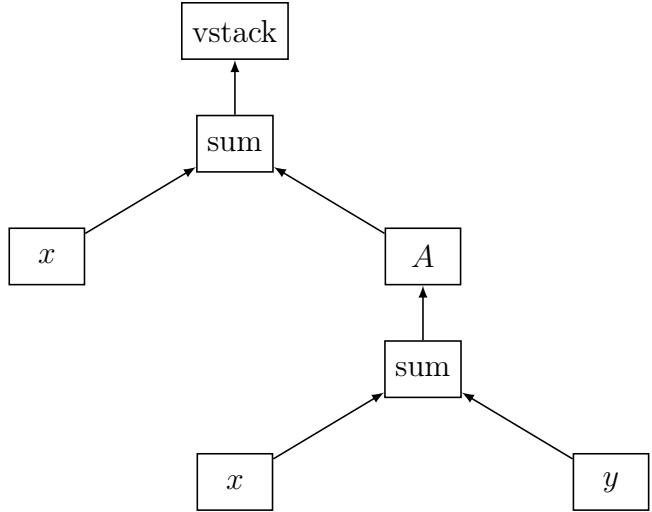

\begin{figure}
\centering
\begin {tikzpicture}[-latex ,auto ,node distance =1.5 cm and 2.5 cm,
on grid ,semithick ,
state/.style ={ rectangle ,top color =white , bottom color = white,
draw,black , text=black , minimum width =1 cm, minimum height = 0.75cm}]
\node[state] (A) {$I$};
\node[state] (C) [above right=of A] {sum};
\node[state] (B) [below right=of C] {$A$};
\node[state] (G) [below =of B] {sum};
\node[state] (F) [below left=of G] {$I$};
\node[state] (D) [below right=of G] {$I$};
\node[state] (E) [above =of C] {vstack};
\node[state] (H) [below left=of F] {copy};
\node[state] (I) [below =of D] {copy};
\node[state] (J) [below right=of H] {split};
\path (A) edge node[below =0.15 cm] {} (C);
\path (B) edge node[below =0.15 cm] {} (C);
\path (C) edge node[below =0.15 cm] {} (E);
\path (D) edge node[below =0.15 cm] {} (G);
\path (F) edge node[below =0.15 cm] {} (G);
\path (G) edge node[below =0.15 cm] {} (B);
\path (J) edge node[below =0.15 cm] {} (H);
\path (J) edge node[below =0.15 cm] {} (I);
\path (H) edge node[below =0.15 cm] {} (A);
\path (H) edge node[below =0.15 cm] {} (F);
\path (I) edge node[below =0.15 cm] {} (D);
\end{tikzpicture}
\caption{The expression DAG $H^{(2)}$ obtained by transforming $H^{(1)}$
in figure \ref{pre_transform_dag}.}\label{post_transform_dag}
\end{figure}
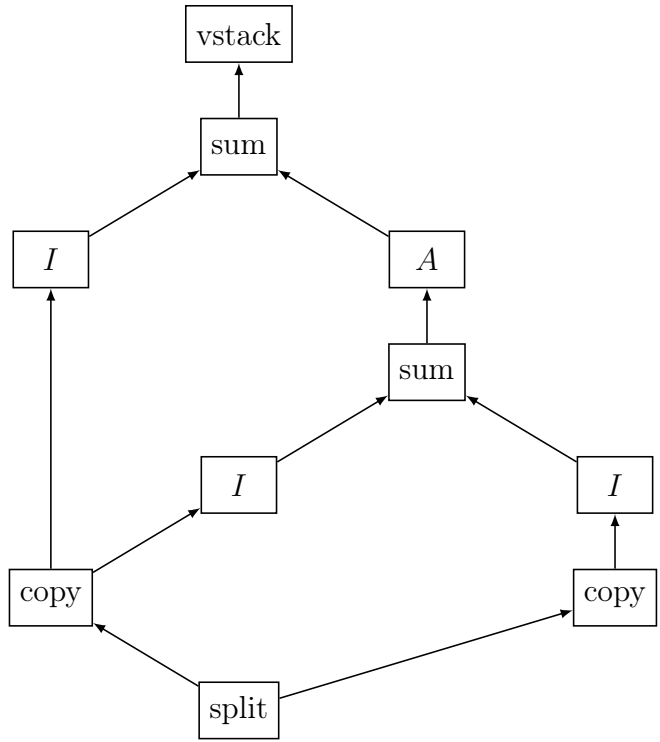

\paragraph{\texttt{Optimize-Graph}.}
The \texttt{Optimize-Graph} subroutine takes an FAO DAG $G$ as input
and outputs an equivalent FAO DAG $G^\mathrm{opt}$,
meaning that the output of algorithm
\ref{fao_dag_eval_alg} is the same for $G$ and $G^\mathrm{opt}$.
We choose $G^\mathrm{opt}$ by optimizing $G$ so that the runtime of
algorithm \ref{fao_dag_eval_alg} is as short as possible
(see \S\ref{opt_runtime}).
We also compress the FAO data and edge arrays to reduce the graph's
memory footprint (see \S\ref{opt_memory}).
We could optimize the graph for the adjoint, $G^*$, as well,
but asymptotically at least the flop count and memory footprint for
$G^*$ will be the same as for $G$,
meaning optimizing $G$ is the same as jointly optimizing $G$ and $G^*$.

\paragraph{\texttt{Matrix-Repr}.}
The \texttt{Matrix-Repr} subroutine takes a list of linear expression
DAGs, $(e_1,\ldots,e_\ell)$, and an ordering over the variables in the
expression DAGs, $<_{V}$, as input and outputs a sparse matrix.
Note that the input types are the same as in the
\texttt{Graph-Repr} subroutine.
In fact, for a given input the sparse matrix output by
\texttt{Matrix-Repr}
represents the same linear function as the FAO DAG output by
\texttt{Graph-Repr}.
The \texttt{Matrix-Repr} subroutine is used by the standard
canonicalization algorithm to produce a sparse matrix representation of
a cone program.
The implementation of \texttt{Matrix-Repr} is described in
\S\ref{sparse_mat_sec}.

\paragraph{Overall algorithm.}
With all the subroutines in place, the matrix-free canonicalization
algorithm is straightforward.
The implementation is given in algorithm \ref{mat_free_canon_alg}.

\begin{algorithm}
\caption{Matrix-free canonicalization.}\label{mat_free_canon_alg}
\begin{algorithmic}
\Require{$p$ is an OPR that satisfies the requirements of DCP.}
\Statex
\State $(s,o,C) \gets \texttt{Conic-Form}(p)$.
\State Choose any ordering $<_{V}$ on the variables in $(s,o,C)$.
\State Choose any ordering $<_{C}$ on the constraints in $C$.
\State $((e_1,\K_1),\ldots, (e_\ell,\K_\ell)) \gets$ the constraints in
$C$ ordered according to $<_{C}$.
\State $c_\mathrm{mat} \gets \texttt{Matrix-Repr}((\texttt{Linear}(o)), <_{V})$.
\State Convert $c_\mathrm{mat}$ from a 1-by-$n$ sparse matrix into a length $n$ array $c_\mathrm{arr}$.
\State $d_\mathrm{arr} \gets \texttt{Evaluate}(\texttt{Constant}(o))$.
\State $b_\mathrm{arr} \gets \vstack(
\texttt{Evaluate}(\texttt{Constant}(e_1)), \ldots,
\texttt{Evaluate}(\texttt{Constant}(e_\ell))
)$.
\State $G \gets \texttt{Graph-Repr}(
(\texttt{Linear}(e_1), \ldots, \texttt{Linear}(e_\ell)), <_{V})$.
\State $G^\mathrm{opt} \gets \texttt{Optimize-Graph}(G)$
\State $\K_\mathrm{list} \gets (\K_1, \ldots, \K_\ell)$.
\State \Return $(c_\mathrm{arr}, d_\mathrm{arr}, b_\mathrm{arr},
G^\mathrm{opt},\K_\mathrm{list})$.
\Statex
\end{algorithmic}
\end{algorithm}

\section{Numerical results}\label{numerical_sec}

\subsection{Implementation}

We have implemented the matrix-free canonicalization algorithm as an
extension of CVXPY \cite{cvxpy_paper}, available at
\begin{center}
    \url{https://github.com/SteveDiamond/cvxpy}.
\end{center}
To solve the resulting matrix-free cone programs,
we implemented modified versions of SCS \cite{SCSpaper} and
POGS \cite{fougner2015parameter}
that are truly matrix-free, available at
\begin{center}
    \url{https://github.com/SteveDiamond/scs}, \\
    \url{https://github.com/SteveDiamond/pogs}.
\end{center}
(The details of these modifications will be described in future work.)
Our implementations are still preliminary and can be improved in many ways.
We also emphasize that the canonicalization is independent of
the particular matrix-free cone solver used.

In this section we benchmark our implementation of matrix-free
canonicalization and of matrix-free SCS and POGS
on several convex optimization problems involving fast linear transforms.
We compare the performance of our matrix-free convex optimization
modeling system with that of the current CVXPY modeling system,
which represents the matrix $A$ in a cone program as a sparse matrix
and uses standard cone solvers.
The standard cone solvers and matrix-free SCS were run serially on a
single Intel Xeon processor,
while matrix-free POGS was run on a Titan X GPU.

\subsection{Nonnegative deconvolution}

We applied our matrix-free convex optimization modeling system to the
nonnegative deconvolution problem (\ref{nonneg_deconv_prob}).
The Python code below constructs and solves problem
(\ref{nonneg_deconv_prob}).
The constants $c$ and $b$ and problem size $n$ are defined elsewhere.
The code is only a few lines,
and it could be easily modified to add regularization on $x$ or apply a
different cost function to $c*x - b$.
The modeling system would automatically adapt to solve the modified
problem.

\begin{verbatim}
# Construct the optimization problem.
x = Variable(n)
cost = sum_squares(conv(c, x) - b)
prob = Problem(Minimize(cost),
               [x >= 0])
# Solve using matrix-free SCS.
prob.solve(solver=MAT_FREE_SCS)
\end{verbatim}

\paragraph{Problem instances.}
We used the following procedure to generate interesting (nontrivial)
instances of problem (\ref{nonneg_deconv_prob}).
For all instances the vector $c \in \reals^n$ was a Gaussian kernel
with standard deviation $n/10$.
All entries of $c$ less than $10^{-6}$ were set to $10^{-6}$,
so that no entries were too close to zero.
The vector $b \in \reals^{2n-1}$ was generated by picking a solution
$\tilde x$ with 5 entries randomly chosen to be nonzero.
The values of the nonzero entries were chosen uniformly at random from
the interval $[0,n/10]$.
We set $b = c*\tilde{x} + v$,
where the entries of the noise vector $v \in \reals^{2n-1}$ were
drawn from a normal distribution with mean zero and variance
$\|c*\tilde{x}\|^2/(400(2n-1))$.
Our choice of $v$ yielded a signal-to-noise ratio near 20.

\begin{figure}[t]
\begin{center}
\includegraphics[width=0.7\textwidth]{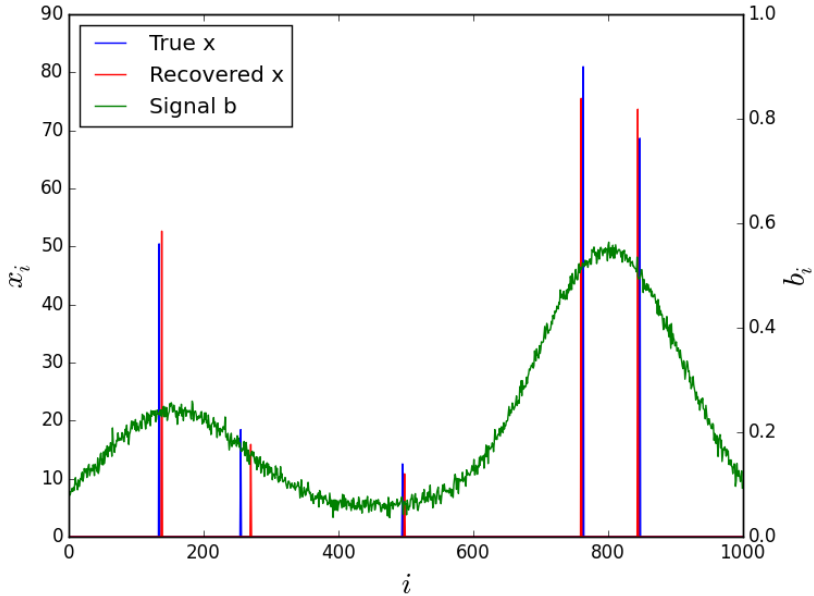}
\end{center}
\caption{Results for a problem instance with $n=1000$.
}\label{nonneg_deconv_result}
\end{figure}

While not relevant to solving the optimization problem, the solution
of the nonnegative deconvolution problem often, but not always,
(approximately) recovers the original vector $\tilde x$.
Figure \ref{nonneg_deconv_result} shows the solution recovered
by ECOS \cite{bib:Domahidi2013ecos} for a problem instance with
$n=1000$.
The ECOS solution $x^\star$ had a cluster of 3-5 adjacent nonzero
entries around each spike in $\tilde x$.
The sum of the entries was close to the value of the spike.
The recovered $x$ in figure \ref{nonneg_deconv_result} shows only the
largest entry in each cluster, with value set to the sum of the
cluster's entries.

\begin{figure}[t]
\begin{center}
\includegraphics[width=0.7\textwidth
]{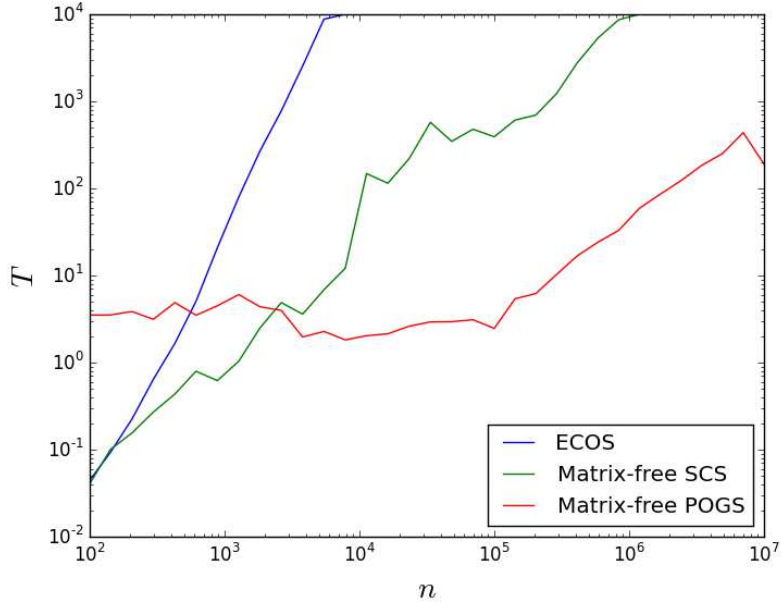}
\end{center}
\caption{Solve time in seconds $T$ versus variable size $n$.
}\label{nonneg_deconv_solve_times}
\end{figure}

\paragraph{Results.}
Figure \ref{nonneg_deconv_solve_times} compares the performance on
problem (\ref{nonneg_deconv_prob}) of the interior-point solver
ECOS \cite{bib:Domahidi2013ecos}
and matrix-free versions of SCS and POGS
as the size $n$ of the optimization variable increases.
We limited the solvers to $10^4$ seconds.

For each variable size $n$ we generated ten different problem instances
and recorded the average solve time for each solver.
ECOS and matrix-free SCS were run with an absolute and relative
tolerance of $10^{-3}$ for the duality gap,
$\ell_2$ norm of the primal residual, and $\ell_2$ norm of the dual
residual.
Matrix-free POGS was run with an absolute tolerance of $10^{-4}$ and a
relative tolerance of $10^{-3}$.

The slopes of the lines show how the solvers scale.
The least-squares linear fit for the ECOS solve times has slope
$3.1$, which indicates that the solve time scales like
$n^3$, as expected.
The least-squares linear fit for the matrix-free SCS solve times has
slope $1.3$,
which indicates that the solve time scales like the expected $n\log n$.
The least-squares linear fit for the matrix-free POGS solve times in
the range $n \in [10^5,10^7]$ has slope $1.1$,
which indicates that the solve time scales like the expected $n\log n$.
For $n < 10^5$, the GPU overhead
(launching kernels, synchronization, \etc)
dominates, and the solve time is nearly constant.




\subsection{Sylvester LP}

We applied our matrix-free convex optimization modeling system to
Sylvester LPs, or convex optimization problems of the form
\begin{equation}\label{sylvester_lp}
\begin{array}{ll}
\mbox{minimize} & \Tr(D^TX) \\
\mbox{subject to} & AXB \leq C \\
& X \geq 0,
\end{array}
\end{equation}
where $X \in \reals^{p \times q}$ is the optimization variable,
and $A \in \reals^{p \times p}$, $B \in \reals^{q \times q}$,
$C \in \reals^{p \times q}$, and $D \in \reals^{p \times q}$
are problem data.
The inequality $AXB \leq C$ is a variant of the
Sylvester equation $AXB = C$ \cite{Gardiner:1992:SSM:146847.146929}.

Existing convex optimization modeling systems will convert
problem (\ref{sylvester_lp}) into the vectorized format
\begin{equation}\label{vect_sylvester_lp}
\begin{array}{ll}
\mbox{minimize} & \vect(D)^T\vect(X) \\
\mbox{subject to} & (B^T \kronecker A)\vect(X) \leq \vect(C) \\
& \vect(X) \geq 0,
\end{array}
\end{equation}
where $B^T \kronecker A \in \reals^{pq \times pq}$ is the Kronecker
product of $B^T$ and $A$.
Let $p = kq$ for some fixed $k$, and let $n=kq^2$ denote the
size of the optimization variable.
A standard interior-point solver will take $O(n^3)$ flops and
$O(n^2)$ bytes of memory to solve problem (\ref{vect_sylvester_lp}).
A specialized matrix-free solver that exploits the matrix product $AXB$,
by contrast, can solve problem (\ref{sylvester_lp}) in $O(n^{1.5})$
flops using $O(n)$ bytes of memory \cite{VaB:95}.

\paragraph{Problem instances.}
We used the following procedure to generate interesting (nontrivial)
instances of problem (\ref{sylvester_lp}).
We fixed $p = 5q$ and generated $A$ and $B$ by first drawing entries
i.i.d.\ from the folded standard normal distribution
(\ie, the absolute value of the standard normal distribution).
We then added $10^{-6}$ to all entries of $A$ and $B$ so they were
guaranteed to be positive.
We generated $D$ by drawing entries i.i.d.\ from a standard normal
distribution.
We fixed $C = 11^T$.
Our method of generating the problem data ensured the problem was
feasible and bounded.

\begin{figure}
\begin{center}
\includegraphics[width=0.7\textwidth]{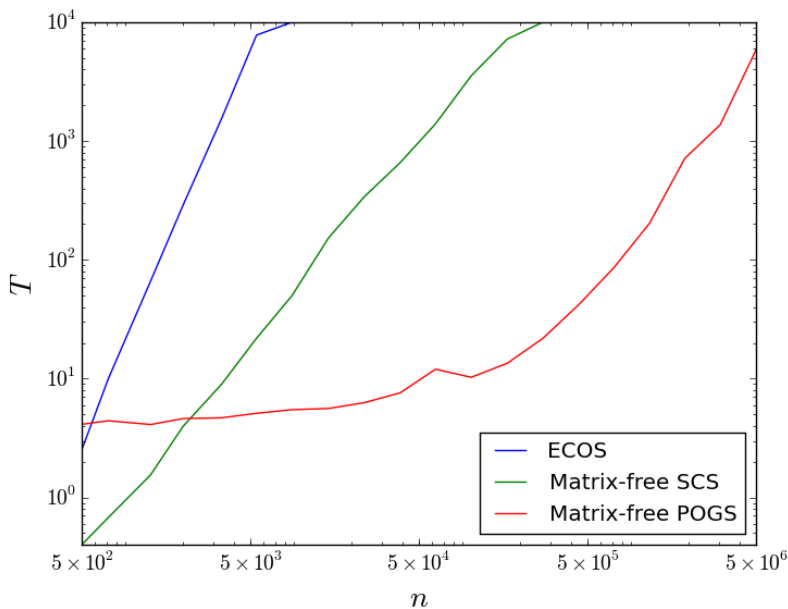}
\end{center}
\caption{Solve time in seconds $T$ versus variable size $n$.
}\label{sylvester_solve_times}
\end{figure}

\paragraph{Results.}
Figure \ref{sylvester_solve_times} compares the performance on
problem (\ref{sylvester_lp}) of the interior-point solver
ECOS \cite{bib:Domahidi2013ecos}
and matrix-free versions of SCS and POGS as the size
$n = 5q^2$ of the optimization variable increases.
We limited the solvers to $10^4$ seconds.
For each variable size $n$ we generated ten different problem instances
and recorded the average solve time for each solver.
ECOS and matrix-free SCS were run with an absolute and relative
tolerance of $10^{-3}$ for the duality gap,
$\ell_2$ norm of the primal residual, and $\ell_2$ norm of the dual
residual.
Matrix-free POGS was run with an absolute tolerance of
$10^{-4}$ and a relative tolerance of $10^{-3}$.
The slope of the lines show how the solvers scale.
The least-squares linear fit for the ECOS solve times has slope $3.3$,
which indicates that the solve time scales like $n^3$, as expected.
The least-squares linear fit for the matrix-free SCS solve times has
slope $1.7$,
which indicates that the solve time scales like the expected $n^{1.5}$.
The least-squares linear fit for the matrix-free POGS solve times in
the range $n \in [10^5,5 \times 10^6]$ has slope $1.6$,
which indicates that the solve time scales like the expected $n^{1.5}$.
For $n < 10^5$, the GPU overhead (launching kernels, synchronization,
\etc) dominates,
and the solve time is nearly constant.

\section*{Acknowledgements}
We would like to thank Eric Chu, Michal Kocvara, and Alex Aiken
for helpful comments on earlier versions of this work,
and to thank Chris Fougner, John Miller, Jack Zhu, and Paul Quigley
for their work on the POGS cone solver and CVXcanon \cite{CVXcanon},
which both contributed to the implementation of matrix-free CVXPY.
This material is based upon work supported by the
National Science Foundation Graduate
Research Fellowship under Grant No. DGE-114747
and by the DARPA X-DATA program.

\newpage
\appendix

\section{Equivalence of the cone program}\label{equivalence_sec}

In this section we explain the precise sense in which the cone program
output by the matrix-free canonicalization algorithm is equivalent to
the original convex optimization problem.

\begin{theorem}\label{equiv_theorem}
Let $p$ be a convex optimization problem whose OPR is a valid input to
the matrix-free canonicalization algorithm.
Let $\Phi(p)$ be the cone program represented by the output of the
algorithm given $p$'s OPR as input.
All the variables in $p$ are present in $\Phi(p)$,
along with new variables introduced during the canonicalization process
\cite{Grant2004,cvxjl}.
Let $x \in \reals^n$ represent the variables in $p$ stacked into a vector and
$t \in \reals^m$ represent the new variables in $\Phi(p)$ stacked into a vector.

The problems $p$ and $\Phi(p)$ are equivalent in the following sense:
\begin{enumerate}
  \item\label{primal_equal} For all $x$ feasible in $p$, there exists $t^\star$ such that $(x, t^\star)$ is feasible in $\Phi(p)$ and $p(x) = \Phi(p)(x, t^\star)$.
  \item\label{primal_bounded} For all $(x,t)$ feasible in $\Phi(p)$, $x$ is feasible in $p$ and $p(x) \leq \Phi(p)(x, t)$.
\end{enumerate}
For a point $x$ feasible in $p$, by $p(x)$ we mean the value of $p$'s
objective evaluated at $x$.
The notation $\Phi(p)(x,t)$ is similarly defined.
\end{theorem}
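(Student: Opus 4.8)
My plan is to factor the canonicalization map $\Phi$ into two stages and treat them separately. The first stage is the \texttt{Conic-Form} subroutine, which rewrites the input OPR $p$ into an equivalent OPR whose objective and constraint expression DAGs have only linear non-start nodes — i.e., a cone program in OPR form, namely problem~(\ref{general_cone_prog}). The second stage consists of \texttt{Linear}, \texttt{Constant}, \texttt{Evaluate}, \texttt{Matrix-Repr}, \texttt{Graph-Repr}, and \texttt{Optimize-Graph}, which merely re-encode that cone program in the tuple form $(c_\mathrm{arr}, d_\mathrm{arr}, b_\mathrm{arr}, G^\mathrm{opt}, \K_\mathrm{list})$. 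I would first prove that the second stage is a lossless change of representation: the data it produces satisfies $c^Tx + d = g^{(0)}_0(x,t^{(0)}) + e^{(0)}_0$ for all $(x,t^{(0)})$, $b$ is the concatenation of the constant offsets appearing in~(\ref{general_cone_prog}), and the FAO DAG $G^\mathrm{opt}$ evaluated by algorithm~\ref{fao_dag_eval_alg} computes exactly the linear map that stacks the outputs of the $h_i$ and $g^{(i)}_j$. Consequently problem~(\ref{cone_repr_prog}) is literally problem~(\ref{general_cone_prog}) written in the standard form~(\ref{cone_prog}), and the theorem reduces to proving statements~\ref{primal_equal} and~\ref{primal_bounded} for the pair $(p, \texttt{Conic-Form}(p))$.

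For the second-stage verification I would argue node by node on affine expression DAGs. \texttt{Linear} and \texttt{Constant} correctly split an affine DAG into its linear and constant parts (induction on the DAG, using that every non-start node is linear); \texttt{Evaluate} returns the value of a constant DAG (again by induction); \texttt{Matrix-Repr} applied to \texttt{Linear}$(o)$ produces the correct row vector $c$ (this is the content of \S\ref{sparse_mat_sec}); and \texttt{Graph-Repr} builds an FAO DAG whose evaluation under algorithm~\ref{fao_dag_eval_alg} returns $\vstack$ of the outputs of $e_1,\ldots,e_\ell$, since its three construction steps only insert $\vstack$, $\FAOsplit$, $\FAOcopy$, identity, and $\mat$ nodes — each semantics-preserving — and the expression-DAG-to-FAO-DAG conversion attaches executable $\Phi_f,\Phi_{f^*}$ without changing the function symbol at any node. \texttt{Optimize-Graph} preserves the function computed by $G$, as noted in \S\ref{opt_runtime} and \S\ref{opt_memory}. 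Finally $b_\mathrm{arr}=\vstack(\texttt{Evaluate}(\texttt{Constant}(e_1)),\ldots)$ reproduces the stacked $d_i$ and $e^{(i)}_j$.

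The heart of the proof is the correctness of \texttt{Conic-Form}, which I would establish by structural induction on the objective and constraint expression DAGs using the two DCP hypotheses. Each nonlinear node $f$ carries a representation $f(x)=\min_{t}\{g_0(x,t)+e_0 : g_j(x,t)+e_j \in \K_j, j=1,\ldots,r\}$; validity of the OPR means this is a \emph{graph implementation}, i.e., $f(x)\le\tau$ holds iff there exists $t$ with $g_0(x,t)+e_0\le\tau$ and $g_j(x,t)+e_j\in\K_j$ for all $j$ (with the analogous hypograph statement for a concave node used where a lower bound is needed), and that the infimum is attained whenever $f(x)$ is finite. The DCP composition rule — which requires $f$ to be monotone in the appropriate direction over the range of each of its arguments — then lets the induction conclude that substituting an argument's graph implementation into $f$'s yields a system of cone constraints cutting out exactly the epigraph of the composition. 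Propagating this through every node gives: for statement~\ref{primal_equal}, given $x$ feasible in $p$, setting each lifted block $t^{(i)}$ to a minimizer of its partial-minimization problem at $x$ yields $(x,t^\star)$ feasible in~(\ref{general_cone_prog}) with $g^{(i)}_0(x,t^{(i)})+e^{(i)}_0=f_i(x)\le 0$ for $i\ge 1$ and objective value $g^{(0)}_0(x,t^{(0)})+e^{(0)}_0=f_0(x)=p(x)$; and for statement~\ref{primal_bounded}, given any feasible $(x,t)$, the constraint $-g^{(i)}_0(x,t^{(i)})-e^{(i)}_0\in\K_+$ together with $f_i(x)\le g^{(i)}_0(x,t^{(i)})+e^{(i)}_0$ forces $f_i(x)\le 0$, so $x$ is feasible in $p$, while $f_0(x)\le g^{(0)}_0(x,t^{(0)})+e^{(0)}_0$ gives $p(x)\le\Phi(p)(x,t)$.

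The main obstacle will be making the \texttt{Conic-Form} induction precise: one must carry along, at every node of every expression DAG, both its DCP curvature tag and the sign and monotonicity data that the composition rule consumes, and verify that the spliced-in cone representations compose so that the resulting constraint system has exactly the epigraph (or hypograph) the composition rule predicts — including the corner cases of minimization with respect to a cone and of arguments shared between nodes through a common variable symbol. This is essentially a formalization of the classical disciplined-convex-programming soundness argument of~\cite{Grant2004,GBY:06,cvxjl}; the matrix-free aspect contributes nothing to it, since \texttt{Graph-Repr}, \texttt{Matrix-Repr}, and \texttt{Optimize-Graph} are exact re-encodings of the linear structure. I would therefore lean on those references for the combinatorial bookkeeping and devote the written proof to the two-stage factorization and the objective-and-feasibility accounting above.
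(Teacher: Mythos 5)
Your proposal is correct and follows essentially the same route as the paper: the paper's proof of Theorem~\ref{equiv_theorem} consists entirely of the citation ``See \cite{Grant2004}'', i.e., it defers the whole DCP soundness argument to the disciplined convex programming literature, exactly as the core of your plan does. The additional content you supply --- the factorization into \texttt{Conic-Form} followed by a lossless re-encoding via \texttt{Graph-Repr}, \texttt{Matrix-Repr}, and \texttt{Optimize-Graph}, together with the explicit feasibility and objective accounting (including attainment of the partial minima needed for the equality in part~\ref{primal_equal}) --- is a correct and more detailed elaboration of what the paper leaves implicit.
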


\begin{proof} See \cite{Grant2004}.
\end{proof}

Theorem \ref{equiv_theorem} implies that $p$ and $\Phi(p)$ have the
same optimal value.
Moreover, $p$ is infeasible if and only if $\Phi(p)$ is infeasible,
and $p$ is unbounded if and only if $\Phi(p)$ is unbounded.
The theorem also implies that any solution $x^\star$ to $p$ is part of
a solution $(x^\star,t^\star)$ to $\Phi(p)$ and vice versa.

A similar equivalence holds between the Lagrange duals of $p$ and $\Phi(p)$,
but the details are beyond the scope of this paper.
See \cite{Grant2004} for a discussion of the dual of the cone program
output by the canonicalization algorithm.

\section{Sparse matrix representation}\label{sparse_mat_sec}

In this section we explain the \texttt{Matrix-Repr} subroutine used in
the standard canonicalization algorithm to obtain a sparse matrix
representation of a cone program.
Recall that the subroutine takes a list of linear expression DAGs,
$(e_1,\ldots,e_\ell)$,
and an ordering over the variables in the expression DAGs, $<_{V}$,
as input and outputs a sparse matrix $A$.

The algorithm to carry out the subroutine is not discussed anywhere in the literature,
so we present here the version used by CVXPY \cite{cvxpy_paper}.
The algorithm first converts each expression DAG into a map from
variables to sparse matrices,
representing a sum of terms.
For example, if the map $\phi$ maps the variable $x \in \reals^n$ to the sparse matrix coefficient $B \in \reals^{m \times n}$
and the variable $y \in \reals^n$ to the sparse matrix coefficient $C \in \reals^{m \times n}$,
then $\phi$ represents the sum $Bx + Cy$.

The conversion from expression DAG to map of variables to sparse
matrices is done using algorithm \ref{expr_dag_to_mat_alg}.
The algorithm uses the subroutine \texttt{Matrix-Coeff},
which takes a node representing a linear function $f$ and indices $i$
and $j$ as inputs and outputs a sparse matrix $D$.
Let $\tilde f$ be a function defined on the range of $f$'s
$i$th input such that $\tilde f(x)$ is equal to $f$'s
$j$th output when $f$ is evaluated on $i$th input $x$ and
zero-valued matrices
(of the appropriate dimensions) for all other inputs.
The output of \texttt{Matrix-Coeff} is the sparse matrix $D$ such
that for any value $x$ in the domain of $\tilde f$,
\[
D\vect(x) = \vect(\tilde f(x)).
\]

The sparse matrix coefficients in the maps of variables to sparse matrices
are assembled into a single sparse matrix $A$, as follows:
Let $x_1,\ldots,x_k$ be the variables in the expression DAGs,
ordered according to $<_V$.
Let $n_i$ be the length of $x_i$ if the variable is a vector and of
$\vect(x_i)$ if the variable is a matrix, for $i=1,\ldots,k$.
Let $m_j$ be the length of expression DAG $e_j$'s output,
for $j=1,\ldots,\ell$.
The coefficients for $x_1$ are placed in the first $n_1$ columns in $A$,
the coefficients for $x_2$ in the next $n_2$ columns, \etc
~Similarly, the coefficients from $e_1$ are placed in the first $m_1$ rows of $A$,
the coefficients from $e_2$ in the next $m_2$ rows, \etc

\begin{algorithm}
\caption{Convert an expression DAG into a map from variables to sparse matrices.}\label{expr_dag_to_mat_alg}
\begin{algorithmic}
\Require{$e$ is a linear expression DAG that outputs a single vector.}
\Statex
\State Create an empty queue $Q$ for nodes that are ready to evaluate.
\State Create an empty set $S$ for nodes that have been evaluated.\State Create a map $M$ from (node, output index) tuples to maps of variables to sparse matrices.
\For {every start node $u$ in $e$}
    \State $x \gets$ the variable represented by node $u$.
    \State $n \gets$ the length of $x$ if the variable is a vector and of $\vect(x)$ if the variable is a matrix.
    \State $M[(u,1)] \gets$ a map with key $x$ and value the $n$-by-$n$ identity matrix.
    \State Add $u$ to $S$.
\EndFor
\State Add all nodes in $e$ to $Q$ whose only incoming edges are from start nodes.
\While {$Q$ is not empty}
    \State $u \gets \text{pop the front node of } Q$.
    \State Add $u$ to $S$.
    \For {edge $(u,p)$ in $u$'s $E_\mathrm{out}$, with index $j$}
        \State Create an empty map $M_j$ from variables to sparse matrices.
        \For {edge $(v,u)$ in $u$'s $E_\mathrm{in}$, with index $i$}
            \State $A^{(ij)} \gets \texttt{Matrix-Coeff}(u, i, j)$.
            \State $k \gets$ the index of $(v,u)$ in $v$'s $E_\mathrm{out}$.
            \For {key $x$ and value $C$ in $M[(v,k)]$}
                \If {$M_j$ has an entry for $x$}
                    \State $M_j[x] \gets M_j[x] + A^{(ij)}C$.
                \Else
                    \State $M_j[x] \gets A^{(ij)}C$.
                \EndIf
            \EndFor
        \EndFor
        \State $M[(u,j)] \gets M_j$.
        \If {for all edges $(q,p)$ in $p$'s $E_\mathrm{in}$,
             $q$ is in $S$}
            \State Add $p$ to the end of $Q$.
        \EndIf
    \EndFor
\EndWhile
\State $u_\mathrm{end} \gets$ the end node of $e$.
\State \Return $M[(u_\mathrm{end},1)]$.
\Statex
\end{algorithmic}
\end{algorithm}

\newpage
\bibliography{abs_ops}

\newcommand{\etalchar}[1]{$^{#1}$}
\begin{thebibliography}{RKBA{\etalchar{+}}13}

\bibitem[ADV15]{CVXOPT}
M.~Andersen, J.~Dahl, and L.~Vandenberghe.
\newblock {CVXOPT}: Python software for convex optimization, version 1.1.
\newblock \url{http://cvxopt.org/}, May 2015.

\bibitem[Akl15]{Akle15}
S.~Akle.
\newblock {\em Algorithms for unsymmetric cone optimization and an
  implementation for problems with the exponential cone}.
\newblock PhD thesis, Stanford University, 2015.

\bibitem[ALSU06]{Aho:2006:CPT:1177220}
A.~Aho, M.~Lam, R.~Sethi, and J.~Ullman.
\newblock {\em Compilers: Principles, Techniques, and Tools (2nd Edition)}.
\newblock Addison-Wesley Longman Publishing Co., Inc., Boston, MA, USA, 2006.

\bibitem[ANR74]{ANR:74}
N.~Ahmed, T.~Natarajan, and K.~Rao.
\newblock Discrete cosine transform.
\newblock {\em IEEE Transactions on Computers}, C-23(1):90--93, January 1974.

\bibitem[BCG11]{BCG:11}
S.~Becker, E.~Cand{\`e}s, and M.~Grant.
\newblock Templates for convex cone problems with applications to sparse signal
  recovery.
\newblock {\em Mathematical Programming Computation}, 3(3):165--218, 2011.

\bibitem[BGH03]{Borm2003}
S.~B{\"o}rm, L.~Grasedyck, and W.~Hackbusch.
\newblock Introduction to hierarchical matrices with applications.
\newblock {\em Engineering Analysis with Boundary Elements}, 27(5):405--422,
  2003.

\bibitem[BMR85]{brandt1985algebraic}
A.~Brandt, S.~McCormick, and J.~Ruge.
\newblock Algebraic multigrid ({AMG}) for sparse matrix equations.
\newblock In D.~Evans, editor, {\em Sparsity and its Applications}, pages
  257--284. Cambridge University Press, Cambridge, 1985.

\bibitem[BPC{\etalchar{+}}11]{BP:11}
S.~Boyd, N.~Parikh, E.~Chu, B.~Peleato, and J.~Eckstein.
\newblock Distributed optimization and statistical learning via the alternating
  direction method of multipliers.
\newblock {\em Foundations and Trends in Machine Learning}, 3:1--122, 2011.

\bibitem[Bra84]{1457236}
R.~Bracewell.
\newblock The fast {H}artley transform.
\newblock In {\em Proceedings of the IEEE}, volume~72, pages 1010--1018, August
  1984.

\bibitem[Br{\'e}79]{Bre:79}
D.~Br{\'e}laz.
\newblock New methods to color the vertices of a graph.
\newblock {\em Communications of the ACM}, 22(4):251--256, April 1979.

\bibitem[BT09a]{TB:09}
A.~Beck and M.~Teboulle.
\newblock Fast gradient-based algorithms for constrained total variation image
  denoising and deblurring problems.
\newblock {\em IEEE Transactions on Image Processing}, 18(11):2419--2434,
  November 2009.

\bibitem[BT09b]{doi:10.1137/080716542}
A.~Beck and M.~Teboulle.
\newblock A fast iterative shrinkage-thresholding algorithm for linear inverse
  problems.
\newblock {\em SIAM Journal on Imaging Sciences}, 2(1):183--202, 2009.

\bibitem[BV04]{BoV:04}
S.~Boyd and L.~Vandenberghe.
\newblock {\em Convex Optimization}.
\newblock Cambridge University Press, 2004.

\bibitem[BY05]{dsdp5}
S.~Benson and Y.~Ye.
\newblock {DSDP5}: Software for semidefinite programming.
\newblock Technical Report ANL/MCS-P1289-0905, Mathematics and Computer Science
  Division, Argonne National Laboratory, Argonne, IL, September 2005.
\newblock Submitted to ACM Transactions on Mathematical Software.

\bibitem[CDDY06]{doi:10.1137/05064182X}
E.~Cand\`{e}s, L.~Demanet, D.~Donoho, and L.~Ying.
\newblock Fast discrete curvelet transforms.
\newblock {\em Multiscale Modeling \& Simulation}, 5(3):861--899, 2006.

\bibitem[CDF92]{CDF:92}
A.~Cohen, I.~Daubechies, and J.-C. Feauveau.
\newblock Biorthogonal bases of compactly supported wavelets.
\newblock {\em Communications on Pure and Applied Mathematics}, 45(5):485--560,
  1992.

\bibitem[CDS98]{doi:10.1137/S1064827596304010}
S.~Chen, D.~Donoho, and M.~Saunders.
\newblock Atomic decomposition by basis pursuit.
\newblock {\em SIAM Journal on Scientific Computing}, 20(1):33--61, 1998.

\bibitem[CEN06]{Ch:06}
T.~Chan, S.~Esedoglu, and M.~Nikolova.
\newblock Algorithms for finding global minimizers of image segmentation and
  denoising models.
\newblock {\em SIAM Journal on Applied Mathematics}, 66(5):1632--1648, 2006.

\bibitem[CGR88]{CGR:88}
J.~Carrier, L.~Greengard, and V.~Rokhlin.
\newblock A fast adaptive multipole algorithm for particle simulations.
\newblock {\em SIAM Journal on Scientific and Statistical Computing},
  9(4):669--686, 1988.

\bibitem[CLW69]{Co:69}
J.~Cooley, P.~Lewis, and P.~Welch.
\newblock The fast {F}ourier transform and its applications.
\newblock {\em IEEE Transactions on Education}, 12(1):27--34, March 1969.

\bibitem[COPB13]{CDPB:13}
E.~Chu, B.~O'Donoghue, N.~Parikh, and S.~Boyd.
\newblock A primal-dual operator splitting method for conic optimization.
\newblock {\em Preprint}, 2013.
\newblock \url{http://stanford.edu/~boyd/papers/pdf/pdos.pdf}.

\bibitem[CP11]{CP:11}
A.~Chambolle and T.~Pock.
\newblock A first-order primal-dual algorithm for convex problems with
  applications to imaging.
\newblock {\em Journal of Mathematical Imaging and Vision}, 40(1):120--145, May
  2011.

\bibitem[CPDB13]{QCML}
E.~Chu, N.~Parikh, A.~Domahidi, and S.~Boyd.
\newblock Code generation for embedded second-order cone programming.
\newblock In {\em Proceedings of the European Control Conference}, pages
  1547--1552. IEEE, 2013.

\bibitem[CT65]{cooley1965algorithm}
J.~Cooley and J.~Tukey.
\newblock An algorithm for the machine calculation of complex {F}ourier series.
\newblock {\em Mathematics of computation}, 19(90):297--301, 1965.

\bibitem[CY00]{choi2000solving}
C.~Choi and Y.~Ye.
\newblock Solving sparse semidefinite programs using the dual scaling algorithm
  with an iterative solver.
\newblock {\em Working paper, Department of Management Sciences, University of
  Iowa}, 2000.

\bibitem[Dau88]{Dau:88}
I.~Daubechies.
\newblock Orthonormal bases of compactly supported wavelets.
\newblock {\em Communications on Pure and Applied Mathematics}, 41(7):909--996,
  1988.

\bibitem[Dau92]{daubechies1992ten}
I.~Daubechies.
\newblock {\em Ten lectures on wavelets}, volume~61.
\newblock SIAM, 1992.

\bibitem[Dav06]{Davis:2006:DMS:1196434}
T.~Davis.
\newblock {\em Direct Methods for Sparse Linear Systems (Fundamentals of
  Algorithms 2)}.
\newblock SIAM, Philadelphia, PA, USA, 2006.

\bibitem[DB]{cvxpy_paper}
S.~Diamond and S.~Boyd.
\newblock {CVXPY}: A {P}ython-embedded modeling language for convex
  optimization.
\newblock {\em Journal of Machine Learning Research}.
\newblock To appear.

\bibitem[DB15]{DB:ICCV}
S.~Diamond and S.~Boyd.
\newblock Matrix-free convex optimization modeling.
\newblock In {\em Proceedings of the IEEE International Conference on Computer
  Vision}, December 2015.
\newblock To appear.

\bibitem[DCB13]{bib:Domahidi2013ecos}
A.~Domahidi, E.~Chu, and S.~Boyd.
\newblock {ECOS}: {A}n {SOCP} solver for embedded systems.
\newblock In {\em Proceedings of the European Control Conference}, pages
  3071--3076, 2013.

\bibitem[DER86]{Duff:1986:DMS:18753}
I.~Duff, A.~Erisman, and J.~Reid.
\newblock {\em Direct Methods for Sparse Matrices}.
\newblock Oxford University Press, Inc., New York, NY, USA, 1986.

\bibitem[DM84]{DM:84}
D.~Dudgeon and R.~Mersereau.
\newblock {\em Multidimensional Digital Signal Processing}.
\newblock Prentice-Hall, Englewood Cliffs, NJ, USA, 1984.

\bibitem[DV03]{DV:03}
M.~Do and M.~Vetterli.
\newblock The finite ridgelet transform for image representation.
\newblock {\em IEEE Transactions on Image Processing}, 12(1):16--28, January
  2003.

\bibitem[FB15]{fougner2015parameter}
C.~Fougner and S.~Boyd.
\newblock Parameter selection and pre-conditioning for a graph form solver.
\newblock {\em Preprint}, 2015.
\newblock \url{http://arxiv.org/pdf/1503.08366v1.pdf}.

\bibitem[FFK{\etalchar{+}}08]{SDPA}
K.~Fujisawa, M.~Fukuda, K.~Kobayashi, M.~Kojima, K.~Nakata, M.~Nakata, and
  M.~Yamashita.
\newblock {SDPA} (semidefinite programming algorithm) user's manual -- version
  7.0.5.
\newblock Technical report, 2008.

\bibitem[FGZ12]{FGZ:12}
K.~Fountoulakis, J.~Gondzio, and P.~Zhlobich.
\newblock Matrix-free interior point method for compressed sensing problems.
\newblock {\em Preprint}, 2012.
\newblock \url{http://arxiv.org/pdf/1208.5435.pdf}.

\bibitem[FKS02]{FKM:02}
M.~Fukuda, M.~Kojima, and M.~Shida.
\newblock Lagrangian dual interior-point methods for semidefinite programs.
\newblock {\em SIAM Journal on Optimization}, 12(4):1007--1031, 2002.

\bibitem[FNW07]{fig:07}
M.~Figueiredo, R.~Nowak, and S.~Wright.
\newblock Gradient projection for sparse reconstruction: Application to
  compressed sensing and other inverse problems.
\newblock {\em {IEEE} Journal of Selected Topics in Signal Processing},
  1(4):586--597, December 2007.

\bibitem[FP02]{Forsyth:2002:CVM:580035}
D.~Forsyth and J.~Ponce.
\newblock {\em Computer Vision: A Modern Approach}.
\newblock Prentice Hall Professional Technical Reference, 2002.

\bibitem[FS11]{fong2011lsmr}
D.~Fong and M.~Saunders.
\newblock {LSMR}: An iterative algorithm for sparse least-squares problems.
\newblock {\em SIAM Journal on Scientific Computing}, 33(5):2950--2971, 2011.

\bibitem[GB08]{GB:08}
M.~Grant and S.~Boyd.
\newblock Graph implementations for nonsmooth convex programs.
\newblock In V.~Blondel, S.~Boyd, and H.~Kimura, editors, {\em Recent Advances
  in Learning and Control}, Lecture Notes in Control and Information Sciences,
  pages 95--110. Springer, 2008.

\bibitem[GB14]{cvx}
M.~Grant and S.~Boyd.
\newblock {CVX}: {MATLAB} software for disciplined convex programming, version
  2.1.
\newblock \url{http://cvxr.com/cvx}, March 2014.

\bibitem[GBY06]{GBY:06}
M.~Grant, S.~Boyd, and Y.~Ye.
\newblock Disciplined convex programming.
\newblock In L.~Liberti and N.~Maculan, editors, {\em Global Optimization: From
  Theory to Implementation}, Nonconvex Optimization and its Applications, pages
  155--210. Springer, 2006.

\bibitem[GLAM92]{Gardiner:1992:SSM:146847.146929}
J.~Gardiner, A.~Laub, J.~Amato, and C.~Moler.
\newblock Solution of the {S}ylvester matrix equation {$AXB^T + CXD^T = E$}.
\newblock {\em ACM Transactions on Mathematical Software}, 18(2):223--231, June
  1992.

\bibitem[GO09]{doi:10.1137/080725891}
T.~Goldstein and S.~Osher.
\newblock The split {B}regman method for {$\ell_1$}-regularized problems.
\newblock {\em SIAM Journal on Imaging Sciences}, 2(2):323--343, 2009.

\bibitem[Gon12a]{Gondzio:12}
J.~Gondzio.
\newblock Convergence analysis of an inexact feasible interior point method for
  convex quadratic programming.
\newblock {\em Preprint}, 2012.
\newblock \url{http://arxiv.org/pdf/1208.5960.pdf}.

\bibitem[Gon12b]{GondzioMatFree:12}
J.~Gondzio.
\newblock Matrix-free interior point method.
\newblock {\em Computational Optimization and Applications}, 51(2):457--480,
  2012.

\bibitem[GR87]{GR:73}
L.~Greengard and V.~Rokhlin.
\newblock A fast algorithm for particle simulations.
\newblock {\em Journal of Computational Physics}, 73(2):325--348, 1987.

\bibitem[Gra04]{Grant2004}
M.~Grant.
\newblock {\em Disciplined Convex Programming}.
\newblock PhD thesis, Stanford University, 2004.

\bibitem[GS91]{greengard1991fast}
L.~Greengard and J.~Strain.
\newblock The fast {G}auss transform.
\newblock {\em SIAM Journal on Scientific and Statistical Computing},
  12(1):79--94, 1991.

\bibitem[GSTV07]{Gilbert:2007:OSF:1250790.1250824}
A.~Gilbert, M.~Strauss, J.~Tropp, and R.~Vershynin.
\newblock One sketch for all: Fast algorithms for compressed sensing.
\newblock In {\em Proceedings of the 39th Annual ACM Symposium on Theory of
  Computing}, STOC '07, pages 237--246, New York, NY, USA, 2007. ACM.

\bibitem[Hac85]{Hackbusch1985}
W.~Hackbusch.
\newblock {\em Multi-Grid Methods and Applications}.
\newblock Springer Berlin Heidelberg, 1985.

\bibitem[Hac99]{Hac:99}
W.~Hackbusch.
\newblock A sparse matrix arithmetic based on {$\cal H$}-matrices. {P}art {I}:
  Introduction to {$\cal H$}-matrices.
\newblock {\em Computing}, 62(2):89--108, 1999.

\bibitem[Hal93]{Hal:93}
M.~Halld\'{o}rsson.
\newblock A still better performance guarantee for approximate graph coloring.
\newblock {\em Information Processing Letters}, 45(1):19--23, 1993.

\bibitem[HHS{\etalchar{+}}14]{SPOT}
G.~Hennenfent, F.~Herrmann, R.~Saab, O.~Yilmaz, and C.~Pajean.
\newblock {SPOT}: A linear operator toolbox, version 1.2.
\newblock \url{http://www.cs.ubc.ca/labs/scl/spot/index.html}, March 2014.

\bibitem[HKS00]{HKS:00}
W.~Hackbusch, B.~Khoromskij, and S.~Sauter.
\newblock On {$\cal{H}$$^2$}-matrices.
\newblock In H.-J. Bungartz, R.~Hoppe, and C.~Zenger, editors, {\em Lectures on
  Applied Mathematics}, pages 9--029. Springer Berlin Heidelberg, 2000.

\bibitem[HS52]{HeS:52}
M.~Hestenes and E.~Stiefel.
\newblock Methods of conjugate gradients for solving linear systems.
\newblock {\em J. Res. N.B.S.}, 49(6):409--436, 1952.

\bibitem[JDCP11]{Jacques20112699}
L.~Jacques, L.~Duval, C.~Chaux, and G.~Peyr\'{e}.
\newblock A panorama on multiscale geometric representations, intertwining
  spatial, directional and frequency selectivity.
\newblock {\em IEEE Transactions on Signal Processing}, 91(12):2699--2730,
  2011.

\bibitem[JlCH01]{ripples}
A.~Jensen and A.~la~Cour-Harbo.
\newblock {\em Ripples in Mathematics}.
\newblock Springer, 2001.

\bibitem[Kar72]{Kar:72}
R.~Karp.
\newblock Reducibility among combinatorial problems.
\newblock In R.~Miller, J.~Thatcher, and J.~Bohlinger, editors, {\em Complexity
  of Computer Computations}, The IBM Research Symposia Series, pages 85--103.
  Springer US, 1972.

\bibitem[KB77]{Krishnaprasad1977339}
P.~Krishnaprasad and R.~Barakat.
\newblock A descent approach to a class of inverse problems.
\newblock {\em Journal of Computational Physics}, 24(4):339--347, 1977.

\bibitem[{Kha}14]{KH:14}
L.~{Khanh Hien}.
\newblock Differential properties of {E}uclidean projection onto power cone.
\newblock \url{http://www.optimization-online.org/DB_FILE/2014/08/4502.pdf},
  2014.

\bibitem[KKL{\etalchar{+}}07]{4407767}
S.-J. Kim, K.~Koh, M.~Lustig, S.~Boyd, and D.~Gorinevsky.
\newblock An interior-point method for large-scale {$\ell_1$}-regularized least
  squares.
\newblock {\em IEEE Journal on Selected Topics in Signal Processing},
  1(4):606--617, December 2007.

\bibitem[KOSZ13]{Kelner:2013:SCA:2488608.2488724}
J.~Kelner, L.~Orecchia, A.~Sidford, and A.~Zhu.
\newblock A simple, combinatorial algorithm for solving {SDD} systems in
  nearly-linear time.
\newblock In {\em Proceedings of the 45th Annual ACM Symposium on Theory of
  Computing}, STOC '13, pages 911--920, New York, NY, USA, 2013. ACM.

\bibitem[KS09]{KS:09}
M.~Ko{\u c}vara and M.~Stingl.
\newblock On the solution of large-scale {SDP} problems by the modified barrier
  method using iterative solvers.
\newblock {\em Mathematical Programming}, 120(1):285--287, 2009.

\bibitem[KV92]{KV:92}
J.~Kovacevic and M.~Vetterli.
\newblock Nonseparable multidimensional perfect reconstruction filter banks and
  wavelet bases for $\cal{R}$$^n$.
\newblock {\em IEEE Transactions on Information Theory}, 38(2):533--555, March
  1992.

\bibitem[LD07]{LD:07}
Y.~Lu and M.~Do.
\newblock Multidimensional directional filter banks and surfacelets.
\newblock {\em IEEE Transactions on Image Processing}, 16(4):918--931, April
  2007.

\bibitem[Lib13]{Liberty:2013:SDM:2487575.2487623}
E.~Liberty.
\newblock Simple and deterministic matrix sketching.
\newblock In {\em Proceedings of the 19th ACM SIGKDD International Conference
  on Knowledge Discovery and Data Mining}, pages 581--588, 2013.

\bibitem[Lim90]{Lim:1990:TSI:130247}
J.~Lim.
\newblock {\em Two-dimensional Signal and Image Processing}.
\newblock Prentice-Hall, Inc., Upper Saddle River, NJ, USA, 1990.

\bibitem[LLM11]{LLM:11}
G.~Lan, Z.~Lu, and R.~Monteiro.
\newblock Primal-dual first-order methods with {$O(1/\epsilon)$}
  iteration-complexity for cone programming.
\newblock {\em Mathematical Programming}, 126(1):1--29, 2011.

\bibitem[LLS04]{LLS:04}
Y.~Lin, D.~Lee, and L.~Saul.
\newblock Nonnegative deconvolution for time of arrival estimation.
\newblock In {\em Proceedings of the IEEE International Conference on
  Acoustics, Speech, and Signal Processing}, volume~2, pages 377--380, May
  2004.

\bibitem[Loa92]{9780898712858}
C.~Van Loan.
\newblock {\em Computational Frameworks for the Fast {F}ourier Transform}.
\newblock SIAM, 1992.

\bibitem[Lof04]{Lofberg:04}
J.~Lofberg.
\newblock {YALMIP}: A toolbox for modeling and optimization in {MATLAB}.
\newblock In {\em Proceedings of the {IEEE} International Symposium on Computed
  Aided Control Systems Design}, pages 294--289, September 2004.

\bibitem[Mal89]{Mal:89}
S.~Mallat.
\newblock A theory for multiresolution signal decomposition: the wavelet
  representation.
\newblock {\em IEEE Transactions on Pattern Analysis and Machine Intelligence},
  11(7):674--693, July 1989.

\bibitem[Mar94]{Mar:94}
S.~Martucci.
\newblock Symmetric convolution and the discrete sine and cosine transforms.
\newblock {\em IEEE Transactions on Signal Processing}, 42(5):1038--1051, May
  1994.

\bibitem[MB12]{MB:12}
J.~Mattingley and S.~Boyd.
\newblock {CVXGEN}: A code generator for embedded convex optimization.
\newblock {\em Optimization and Engineering}, 13(1):1--27, 2012.

\bibitem[mos15]{mosek}
{MOSEK} optimization software, version 7.
\newblock \url{https://mosek.com/}, January 2015.

\bibitem[MZQ15]{CVXcanon}
J.~Miller, J.~Zhu, and P.~Quigley.
\newblock Cvxcanon, version 0.0.22.
\newblock \url{https://github.com/cvxgrp/CVXcanon}, October 2015.

\bibitem[Nes06]{nes:06}
Y.~Nesterov.
\newblock Towards nonsymmetric conic optimization.
\newblock \url{http://www.optimization-online.org/DB_FILE/2006/03/1355.pdf},
  March 2006.
\newblock {CORE} discussion paper.

\bibitem[NN92]{NesNem:92}
Y.~Nesterov and A.~Nemirovsky.
\newblock Conic formulation of a convex programming problem and duality.
\newblock {\em Optimization Methods and Software}, 1(2):95--115, 1992.

\bibitem[NN94]{NN:94}
Y.~Nesterov and A.~Nemirovskii.
\newblock {\em Interior-Point Polynomial Algorithms in Convex Programming},
  volume~13.
\newblock SIAM, 1994.

\bibitem[NW06]{NW:06}
J.~Nocedal and S.~Wright.
\newblock {\em Numerical Optimization}.
\newblock Springer Science, 2006.

\bibitem[OCPB15]{SCSpaper}
B.~O'Donoghue, E.~Chu, N.~Parikh, and S.~Boyd.
\newblock Conic optimization via operator splitting and homogeneous self-dual
  embedding.
\newblock {\em Preprint}, 2015.
\newblock \url{http://stanford.edu/~boyd/papers/pdf/scs.pdf}.

\bibitem[PB14]{parikh2013proximal}
N.~Parikh and S.~Boyd.
\newblock Proximal algorithms.
\newblock {\em Foundations and Trends in Optimization}, 1(3):123--231, 2014.

\bibitem[PC11]{pock2011diagonal}
T.~Pock and A.~Chambolle.
\newblock Diagonal preconditioning for first order primal-dual algorithms in
  convex optimization.
\newblock In {\em Proceedings of the IEEE International Conference on Computer
  Vision}, pages 1762--1769, 2011.

\bibitem[RKBA{\etalchar{+}}13]{Halide}
J.~Ragan-Kelley, C.~Barnes, A.~Adams, S.~Paris, F.~Durand, and S.~Amarasinghe.
\newblock Halide: A language and compiler for optimizing parallelism, locality,
  and recomputation in image processing pipelines.
\newblock In {\em Proceedings of the 34th ACM SIGPLAN Conference on Programming
  Language Design and Implementation}, PLDI '13, pages 519--530, New York, NY,
  USA, 2013. ACM.

\bibitem[SCD02]{SCD:02}
J.-L. Starck, E.~Cand\`{e}s, and D.~Donoho.
\newblock The curvelet transform for image denoising.
\newblock {\em IEEE Transactions on Image Processing}, 11(6):670--684, June
  2002.

\bibitem[SKM{\etalchar{+}}13]{PDCO}
M.~Saunders, B.~Kim, C.~Maes, S.~Akle, and M.~Zahr.
\newblock {PDCO}: Primal-dual interior method for convex objectives.
\newblock \url{http://web.stanford.edu/group/SOL/software/pdco/}, November
  2013.

\bibitem[ST04]{Spielman:2004:NTA:1007352.1007372}
D.~Spielman and S.-H. Teng.
\newblock Nearly-linear time algorithms for graph partitioning, graph
  sparsification, and solving linear systems.
\newblock In {\em Proceedings of the 36th Annual ACM Symposium on Theory of
  Computing}, STOC '04, pages 81--90, New York, NY, USA, 2004. ACM.

\bibitem[Stu99]{Sturm1999}
J.~Sturm.
\newblock Using {S}e{D}u{M}i 1.02, a {MATLAB} toolbox for optimization over
  symmetric cones.
\newblock {\em Optimization Methods and Software}, 11(1-4):625--653, 1999.

\bibitem[SY14]{sy:14}
A.~Skajaa and Y.~Ye.
\newblock A homogeneous interior-point algorithm for nonsymmetric convex conic
  optimization.
\newblock {\em Mathematical Programming}, pages 1--32, May 2014.

\bibitem[Toh04]{T:04}
K.-C. Toh.
\newblock Solving large scale semidefinite programs via an iterative solver on
  the augmented systems.
\newblock {\em SIAM Journal on Optimization}, 14(3):670--698, 2004.

\bibitem[TTT99]{toh1999sdpt3}
K.-C. Toh, M.~Todd, and R.~T{\"u}t{\"u}nc{\"u}.
\newblock {SDPT3} --- a {MATLAB} software package for semidefinite programming,
  version 4.0.
\newblock {\em Optimization Methods and Software}, 11:545--581, 1999.

\bibitem[UMZ{\etalchar{+}}14]{cvxjl}
M.~Udell, K.~Mohan, D.~Zeng, J.~Hong, S.~Diamond, and S.~Boyd.
\newblock Convex optimization in {J}ulia.
\newblock {\em SC14 Workshop on High Performance Technical Computing in Dynamic
  Languages}, 2014.

\bibitem[Vai13]{linop}
G.~Vaillant.
\newblock linop, version 0.7.
\newblock \url{http://pythonhosted.org//linop/}, December 2013.

\bibitem[VB93]{vandenberghe1993polynomial}
L.~Vandenberghe and S.~Boyd.
\newblock A polynomial-time algorithm for determining quadratic {L}yapunov
  functions for nonlinear systems.
\newblock In {\em Proceedings of the European Conference on Circuit Theory and
  Design}, pages 1065--1068, 1993.

\bibitem[VB95]{VaB:95}
L.~Vandenberghe and S.~Boyd.
\newblock A primal-dual potential reduction method for problems involving
  matrix inequalities.
\newblock {\em Mathematical Programming}, 69(1-3):205--236, 1995.

\bibitem[vdBF09]{doi:10.1137/080714488}
E.~van~den Berg and M.~Friedlander.
\newblock Probing the {P}areto frontier for basis pursuit solutions.
\newblock {\em SIAM Journal on Scientific Computing}, 31(2):890--912, 2009.

\bibitem[Vis12]{vishnoi2012laplacian}
K.~Vishnoi.
\newblock Laplacian solvers and their algorithmic applications.
\newblock {\em Theoretical Computer Science}, 8(1-2):1--141, 2012.

\bibitem[Wri87]{Wright:87}
S.~Wright.
\newblock {\em Primal-Dual Interior-Point Methods}, volume~54.
\newblock SIAM, 1987.

\bibitem[YDC05]{YDC:05}
L.~Ying, L.~Demanet, and E.~Cand\`{e}s.
\newblock {3D} discrete curvelet transform.
\newblock In {\em Proceedings of SPIE: Wavelets XI}, volume 5914, pages
  351--361, 2005.

\bibitem[YDD05]{NIPS2004_2550}
C.~Yang, R.~Duraiswami, and L.~Davis.
\newblock Efficient kernel machines using the improved fast {G}auss transform.
\newblock In L.~Saul, Y.~Weiss, and L.~Bottou, editors, {\em Advances in Neural
  Information Processing Systems 17}, pages 1561--1568. MIT Press, 2005.

\bibitem[YDGD03]{1238383}
C.~Yang, R.~Duraiswami, N.~Gumerov, and L.~Davis.
\newblock Improved fast {G}auss transform and efficient kernel density
  estimation.
\newblock In {\em Proceedings of the IEEE International Conference on Computer
  Vision}, volume~1, pages 664--671, October 2003.

\bibitem[Ye11]{Ye:11}
Y.~Ye.
\newblock {\em Interior Point Algorithms: Theory and Analysis}.
\newblock Wiley-Interscience, 2011.

\bibitem[ZPB07]{ZCB:07}
C.~Zach, T.~Pock, and H.~Bischof.
\newblock A duality based approach for realtime {TV-$\ell_1$} optical flow.
\newblock In {\em Pattern Recognition}, volume 4713 of {\em Lecture Notes in
  Computer Science}, pages 214--223. Springer Berlin Heidelberg, 2007.

\bibitem[ZST10]{ZST:10}
X.-Y. Zhao, D.~Sun, and K.-C. Toh.
\newblock A {N}ewton-{CG} augmented {L}agrangian method for semidefinite
  programming.
\newblock {\em SIAM Journal on Optimization}, 20(4):1737--1765, 2010.

\end{thebibliography}

\end{document}